\theoremstyle{theorem}
\newtheorem{theorem}{Theorem}[section]
\newaliascnt{lemma}{theorem}
\newtheorem{lemma}[lemma]{Lemma}
\newaliascnt{corollary}{theorem}
\newaliascnt{claim}{theorem}
\newtheorem{claim}[claim]{Claim}
\title{Face Flips in Origami Tessellations}
\author{Hugo A. Akitaya\thanks{Tufts University, \protect\url{hugo.alves_akitaya@tufts.edu}} \and Vida Dujmovi{\'c}\thanks{University of Ottawa, \protect\url{vida.dujmovic@uottawa.ca}} \and David Eppstein\thanks{University of California, Irvine, \protect\url{eppstein@uci.edu}} \and Thomas C. Hull\thanks{Western New England University,\protect\url{thull@wne.edu}}  \and Kshitij Jain\thanks{Borealis AI, \protect\url{k22jain@uwaterloo.ca}} \and Anna Lubiw\thanks{University of Waterloo, \protect\url{alubiw@uwaterloo.ca}} }
\date{}
\begin{document}

%
%
%

\newcommand{\R}{\mathbb{R}}

\maketitle

\begin{abstract}
Given a flat-foldable origami crease pattern $G=(V,E)$ (a straight-line drawing of a planar graph on a region of the plane) with a mountain-valley (MV) assignment $\mu:E\to\{-1,1\}$ indicating which creases in $E$ bend convexly (mountain) or concavely (valley), we may \emph{flip} a face $F$ of $G$ to create a new MV assignment $\mu_F$ which equals $\mu$ except for all creases $e$ bordering $F$, where we have $\mu_F(e)=-\mu(e)$.  In this paper we explore the configuration space of face flips for a variety of crease patterns $G$ that are tilings of the plane, proving examples where $\mu_F$ results in a MV assignment that is either never, sometimes, or always flat-foldable for various choices of $F$.  We also consider the problem of finding, given two foldable MV assignments $\mu_1$ and $\mu_2$ of a given crease pattern $G$, a minimal sequence of face flips to turn $\mu_1$ into $\mu_2$.  We find polynomial-time algorithms for this in the cases where $G$ is either a square grid or the Miura-ori, and show that this problem is NP-hard in the case where $G$ is the triangle lattice.
\end{abstract}

\section{Introduction}

An {\em origami crease pattern} $(G,P)$ is a straight-line drawing of a planar graph $G=(V,E)$ on a region $P$ of $\R^2$, where we allow for the case $P=\R^2$ and $G$ is an infinite graph.  A {\em flat origami} is a function $f:P\to\R^2$ from an origami crease pattern $(G,P)$ to the plane that is continuous, an isometry on each face of $G$, and non-differentiable on all the edges and vertices of $G$.  The combinatorics of flat origamis have been studied somewhat extensively (see \cite{Hull2} for a survey), but many open questions remain.  

For example, since flat origamis aim to model the folded state of paper when folded completely flat, we may record the state of each crease segment with a function $\mu:E\to\{-1,1\}$, where $\mu(e)=-1$ means that  the crease $e$ is a {\em valley crease} (meaning it bends the paper in a concave direction) and $\mu(e)=1$ means that $e$ is a {\em mountain crease} (so it bends in a convex direction).  We refer to $\mu$ as a {\em mountain-valley (MV) assignment} on $G$, and MV assignments that do not force the paper to self-intersect when physically folded are called {\em valid}.   Characterizing and enumerating valid MV assignments are open problems.  Even restricting ourselves to MV assignments that are {\em locally valid}, meaning that we only require that each vertex not force a self-intersection of physically-folded paper, is of interest, having been studies in statistical mechanics \cite{Assis} and applied to polymer membrane folding \cite{DiF}.  As another example, counting locally-valid MV assignments for the family of crease patterns known as the {\em Miura-ori} has been shown to be equivalent to counting proper vertex colorings of grid graphs \cite{GHull}, which is an unsolved problem.  

A new combinatorial tool that shows promise for helping explore locally-valid MV assignments is the face flip.  If $F$ is a face in a flat-foldable crease pattern $(G,P)$ and we have a MV assignment $\mu$, then a {\em face flip of $F$ in $(G,P)$ under $\mu$} is a new MV assignment $\mu_F$ where $\mu_F=\mu$ for all edges in $G$ except those that border $F$, where we have $\mu_F=-\mu$.  That is, we ``flip'' the creases bordering $F$ from mountain to valley and vice-versa.  Face flips seem to have been first introduced by Kyle VanderWerf in \cite{Van14}, but they are otherwise unexplored.

In this paper, we examine the properties of face flips on flat origami crease patterns $(G,P)$  where $G$ is certain regular tilings of the plane.  Such flat origamis are also known as {\em origami tessellations}, and they are of central interest in applications and prior work on flat foldings \cite{Assis,DiF,Evans,GHull,Silverberg}.  

Specifically, after setting up background results in Section~\ref{sec1},  we will see in Section~\ref{sec2} families of quadrilateral crease patterns where any face flip on a MV assignment will preserve its local validity, another where only certain faces may be flipped, and yet another where no face flip will make a valid MV assignment.  In Section~\ref{sec3} we will prove that any locally-valid MV assignment of the Miura-ori crease pattern can be converted to any other via face flips, thus showing that the configuration space of locally-valid MV assignments of the Miura-ori is connected under face flips.  We also employ a height function to determine the minimum number of face flips needed to traverse this configuration space.  In Section~\ref{sec4} we consider the considerably more complicated case of origami tessellations whose crease pattern is the triangle lattice, showing that its configuration space is also connected.  However, we show that, unlike the quadrilateral cases, determining the minimum number of face flips needed to convert one locally-valid MV assignment of the triangle lattice to another is NP-hard using a reduction from minimum vertex cover with maximum degree three in a hexagonal grid.  

\section{Preliminaries}\label{sec1}

The most fundamental result of flat-foldability is {\em Kawasaki's Theorem}:

\begin{theorem}[Kawasaki]\label{Kawasaki}
Let $(G,P)$ be an origami crease pattern where $G$ has only one vertex $v$ in the interior of $P$ and all edges in $G$ are adjacent to $v$.  Let $\alpha_1,\ldots,\alpha_{k}$ be the sector angles, in order, between the consecutive edges around $v$.  Then there exists a flat origami function for $(G,P)$ if and only if $k=2n$ is even and
$$\alpha_1-\alpha_2+\alpha_3-\cdots-\alpha_{2n}=0.$$
\end{theorem}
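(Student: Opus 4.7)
The plan is to view a flat origami function $f$ as a tuple of planar isometries $f_1,\dots,f_k$, one per sector, constrained to agree on each shared edge. Continuity of $f$ at $v$ forces every $f_i$ to fix the common image point $v'=f(v)$, and continuity across an edge $e_i$ makes the transition $f_{i+1}\circ f_i^{-1}$ fix the line $f_i(e_i)$ pointwise. Because $f$ is non-differentiable along $e_i$ by the definition of a flat origami, this transition cannot be the identity and must therefore be the non-trivial reflection across $f_i(e_i)$; in particular the orientations of $f_1,f_2,\dots$ alternate.

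For the necessity direction I would normalize $f_1=\mathrm{id}$, so each $f_{i+1}$ is obtained from $f_i$ by composing with a reflection about a line through $v'$, and $f_{k+1}$ is a composition of $k$ such reflections. If $k$ is odd this composition is orientation-reversing, hence cannot equal $\mathrm{id}$, so $k$ must be even. To extract the angle condition I would traverse a small circle around $v$ in the paper and track the corresponding angular displacement in the image; crossing sector $\alpha_i$ contributes signed angle $(-1)^{i-1}\alpha_i$ because the local orientation flips at each crease. Setting the total displacement to $0\pmod{2\pi}$ and combining with $\sum_i\alpha_i=2\pi$ then yields the alternating sum identity.

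For sufficiency I would reverse this process: set $f_1=\mathrm{id}$ on the first sector and define $f_{i+1}$ inductively as the reflection across $f_i(e_i)$ composed with $f_i$. By construction $f$ is an isometry on each face, continuous across each edge, and non-differentiable there (since adjacent maps have opposite orientation). The only thing left to verify is global consistency, $f_{k+1}=f_1$, which follows from the two-reflection formula for rotations together with Kawasaki's equation (evenness of $k$ gives a rotation rather than a reflection, and the alternating sum vanishing forces the rotation angle to be $0$).

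The main subtlety I anticipate is the passage from $\equiv 0\pmod{2\pi}$ to exact equality in the necessity step. Since the sector angles are strictly positive and sum to $2\pi$, the partial sums $S_+=\alpha_1+\alpha_3+\cdots$ and $S_-=\alpha_2+\alpha_4+\cdots$ both lie strictly in $(0,2\pi)$ and satisfy $S_++S_-=2\pi$ and $S_+-S_-\equiv 0\pmod{2\pi}$, which together pin $S_+=S_-=\pi$ and hence give the exact identity $\alpha_1-\alpha_2+\cdots-\alpha_{2n}=0$. A secondary technical point is justifying that each transition is the non-trivial reflection rather than the identity, which is precisely where the non-differentiability clause in the definition of a flat origami is invoked.
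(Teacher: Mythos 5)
The paper does not prove this theorem itself---it defers to Hull's survey \cite{Hull2}---but your argument is precisely the standard proof given there: the transition maps across creases are reflections (non-identity because of the non-differentiability clause), the parity of a composition of $k$ reflections forces $k$ even, tracking the image of a small circle around $v$ gives the alternating sum condition modulo $2\pi$, positivity of the sector angles together with $\sum\alpha_i=2\pi$ upgrades this to exact equality, and sufficiency follows by composing the reflections and checking that the resulting rotation is trivial. Your proposal is correct and takes essentially the same route, and you have correctly identified the two genuine subtleties (ruling out the identity transition, and passing from $\equiv 0\pmod{2\pi}$ to $=0$).
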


See \cite{Hull2} for a proof.

One of the most basic requirements for a MV assignment to be valid is for there to be an appropriate number of mountains and valleys at each vertex.

\begin{theorem}[Maekawa]\label{Maekawa}
Let $v$ be a vertex in a flat-foldable crease pattern with a valid MV assignment $\mu$ and let $E$ be the set of crease edges adjacent to $v$.
Then 
$$\sum_{e\in E}\mu(e) = \pm 2.$$
\end{theorem}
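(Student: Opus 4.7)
The plan is to prove this locally by folding a small circle around $v$ and computing its total turning. First I would restrict to a disk $D$ of radius $r$ centered at $v$ small enough that $D$ contains no other vertices or edges of $G$, so the only creases inside $D$ are the $k$ edges through $v$. These creases partition the boundary circle $\partial D$ into $k$ arcs, one per sector of angle $\alpha_i$. Since $\mu$ is valid, the flat origami $f$ folds $D$ flat without self-intersection; because $f$ is an isometry on each sector, each sectoral arc maps to an arc of length $\alpha_i r$ on the circle $S$ of radius $r$ about $f(v)$, and because each fold is a reflection, consecutive arcs traverse $S$ in alternating directions. The image curve $C = f(\partial D)$ is therefore a closed curve on $S$ consisting of $k$ circular arcs joined at cusps at the crease images.

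Next I would compute the total turning of $C$ in two ways. Along an arc the tangent turns by $+\alpha_i$ or $-\alpha_i$, with signs alternating by sector parity; thus the total arc contribution is $\alpha_1 - \alpha_2 + \alpha_3 - \cdots - \alpha_{2n}$, which vanishes by Kawasaki's theorem applied to the flat-foldable vertex $v$. All remaining turning is concentrated at the cusps, where the tangent reverses direction and contributes $+\pi$ or $-\pi$. A local orientation check at each crease shows that mountain creases contribute one sign and valley creases the other, so summing over all $k$ creases yields total turning equal to $\pi \sum_{e \in E} \mu(e)$, up to a global sign that depends on the overall orientation convention.

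On the other hand, the total turning of any closed piecewise-smooth curve in the plane equals $2\pi$ times its rotation index, an integer. For a closed curve arising as the boundary of the validly-folded disk $D$, I would argue that this rotation index is $\pm 1$; combining with the previous paragraph then gives $\sum_{e \in E} \mu(e) = \pm 2$, as required. I expect the main obstacle to be this final step. The arc cancellation and the $\pm\pi$ jumps at cusps alone only prove that $\sum_{e} \mu(e)$ is an even integer, and pinning it down to $\pm 2$ requires invoking the full non-self-intersection condition of validity: one natural route is to observe that the folded image of $D$ covers a neighborhood of $f(v)$ with degree $\pm 1$, which forces $C$ to have rotation index $\pm 1$ rather than some larger integer.
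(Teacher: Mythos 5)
Your overall strategy matches the one the paper itself points to (it defers the proof to Hull's survey, noting only that the cross-section of a physically flat-folded vertex is a closed curve of turning number one, whence $\pi M-\pi V=\pm 2\pi$): map the small circle $\partial D$ onto the circle $S$ about $f(v)$, note that the smooth arc contributions to the total turning cancel by Kawasaki's Theorem while each crease contributes a cusp turning of $\pm\pi$ according to its MV sign, and then pin the total turning down to $\pm 2\pi$. Up to the final step this is the standard argument and is correct.

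The gap is in the route you propose for that final step. The folded disk does \emph{not} cover a neighborhood of $f(v)$ with degree $\pm 1$; its degree there is $0$. Indeed, $C=f(\partial D)$ lies entirely on the circle $S$, so the whole open disk $B(f(v),r)$ sits in a single component of the complement of $C$, and the winding number of $C$ about $f(v)$ is $\tfrac{1}{2\pi}\left(\alpha_1-\alpha_2+\alpha_3-\cdots-\alpha_{2n}\right)=0$ by Kawasaki --- equivalently, the $2n$ sectors alternate orientation under $f$, so the signed count of layers over any point near $f(v)$ vanishes. Moreover, the rotation index of a closed curve is the degree of its \emph{tangent} map, not its winding number about a point, so even a correct degree computation would not by itself determine the rotation index. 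The standard way to close the argument uses validity more strongly: because the folding is non-self-intersecting, the layers can be separated by a small perturbation so that the cross-sectional curve becomes a \emph{simple} closed curve; Hopf's Umlaufsatz then gives total turning exactly $\pm 2\pi$, which combined with your cusp computation yields $\sum_{e\in E}\mu(e)=\pm 2$. As written, your argument establishes only that $\sum_{e\in E}\mu(e)$ is even.
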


This is known as {\em Maekawa's Theorem}, although it is often written as $M-V=\pm2$ where $M$ and $V$ are the number of mountain and valley creases, respectively, at $v$.  A proof can be found in \cite{Hull2}; it relies on the fact that the cross section of a physically-flat-folded vertex will be a closed curve with turning number 1, giving us that  $\pi M-\pi V=\pm2\pi$.  

If the sector angles around a flat-foldable vertex are all equal, then by symmetry we can choose any of them to be mountains and valleys so long as $\sum\mu(e)=\pm 2$.  This implies the following:

\begin{theorem}\label{MaekawaNASC}
Let $v$ be a vertex in a flat-foldable crease pattern whose sector angles between consecutive creases are all equal.  Then a MV assignment $\mu$ will be valid at the vertex $v$ if and only if $\mu$ satisfies Maekawa's Theorem at $v$.
\end{theorem}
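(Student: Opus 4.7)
The ``only if'' direction is immediate from Maekawa's Theorem (Theorem~\ref{Maekawa}). For the ``if'' direction, my plan is to invoke the classical Big--Little--Big characterization of local validity at a single flat-foldable vertex (see, e.g., \cite{Hull2}), which states that a MV assignment at such a vertex is locally valid if and only if (i) it satisfies Maekawa's condition, and (ii) for every sector $\alpha_i$ with $\alpha_{i-1} > \alpha_i < \alpha_{i+1}$, the two creases bounding $\alpha_i$ receive opposite MV labels. Given this, the argument is short: at an equi-angular vertex all sectors are equal, so no triple of consecutive sectors realizes the strict inequality required by (ii); hence every Big--Little--Big constraint is vacuous, and local validity at $v$ reduces to Maekawa's condition alone.

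The main obstacle I expect is pinning down the sufficiency direction of the Big--Little--Big characterization in the form stated above; necessity is the standard statement, whereas sufficiency is slightly less widely quoted in the literature. If a direct citation is not available, my fallback is an induction on the vertex degree $2n$. The base case $2n = 4$ reduces to a finite check over the eight Maekawa-satisfying assignments at a degree-$4$ equi-angular vertex. In the inductive step, Maekawa forces $M \neq V$, so in the cyclic sequence of MV labels some two adjacent creases share a label; a local crimp fold along this pair removes two creases and merges three consecutive sectors into one, producing a strictly smaller flat-foldable vertex whose local validity implies the original's. The caveat is that such a crimp does not preserve equi-angularity, so this fallback would have to be stated for the broader class of flat-foldable vertices in which local validity is controlled by the combination of Maekawa and Big--Little--Big, essentially returning to the first approach.
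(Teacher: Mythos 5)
Your ``only if'' direction is fine, but the ``if'' direction has a genuine gap: the characterization you want to cite---that a MV assignment at a single flat-foldable vertex is valid if and only if it satisfies Maekawa's condition together with the Big--Little--Big constraint at every strict local minimum sector---is not a theorem. Validity at a vertex is governed by a recursive condition (crimp away the smallest sectors, then re-examine the reduced vertex), and a single pass of Big--Little--Big does not capture it. Concretely, take the degree-$6$ vertex with consecutive sector angles $45^\circ,45^\circ,45^\circ,45^\circ,90^\circ,90^\circ$ (Kawasaki holds). No sector is a strict local minimum, so your condition (ii) is vacuous; yet the assignment with $\mu(e_1)=\cdots=\mu(e_4)=1$ and $\mu(e_5)=\mu(e_6)=-1$ satisfies Maekawa while violating Theorem~\ref{genMaekawa} on the run $\alpha_1=\cdots=\alpha_4$ (the sum over $e_1,\ldots,e_5$ is $3$, not $\pm1$), so it is not valid. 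Since your main route leans entirely on this false equivalence, it does not go through as stated.

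Your fallback is much closer to a correct proof (and to the symmetry/crimping remark the paper itself offers in place of a proof), but two details are off. First, a crimp removes an adjacent \emph{mountain--valley} pair, not two adjacent creases carrying the same label; folding two adjacent same-sign creases does not cancel them. Second, your worry that crimping destroys equi-angularity is unfounded in exactly the case you need: crimping an adjacent opposite-sign pair at an all-equal-angle vertex merges three sectors of size $\alpha$ into one sector of size $\alpha-\alpha+\alpha=\alpha$, so the reduced vertex again has all equal angles, has degree $2n-2$, and still satisfies Maekawa (one mountain and one valley are removed). Such an opposite-sign adjacent pair always exists when $2n\ge 4$, because Maekawa forces $\min(M,V)=n-1\ge 1$. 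This yields a clean induction entirely within the equi-angular class, with base case $2n=2$, which (together with the standard fact that un-crimping preserves validity) is the honest version of the argument; I would recommend abandoning the citation-based route and writing up the corrected crimp induction instead.
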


When the angles between consecutive creases around a flat-folded vertex are not all equal, then Maekawa is only a necessary condition.  Nonetheless, other constraints on valid MV assignments for such vertices can be deduced.  For example, if we have consecutive sector angles $\alpha_{i-1}, \alpha_i, \alpha_{i+1}$, with $\alpha_i$ between crease edges $e_i$ and $e_{i+1}$, at a vertex with a valid MV assignment $\mu$, where $\alpha_i$ is strictly smaller then both of the other angles, then we must have $\mu(e_i)=-\mu(e_{i+1})$.  This is because if $\mu(e_i)=\mu(e_{i+1})$ then the two sectors with angles $\alpha_{i-1}$ and $\alpha_{i+1}$ would be folded over, and more than cover, the sector with angle $\alpha_i$ on the same side of the paper, causing the sectors of paper with angles $\alpha_{i-1}$ and $\alpha_{i+1}$ to intersect each other.  This constraint, where $\mu(e_i)=-\mu(e_{i+1})$ is forced, is sometimes called the {\em Big-Little-Big Angle Lemma}.  This is actually a special case of the following, proved in \cite{Hull2}:

\begin{theorem}\label{genMaekawa}
Let $v$ be a vertex in a flat-foldable crease pattern with a valid MV assignment $\mu$, and suppose that  we have a local minimum of consecutive equal sector angles between the crease edges $e_i,\ldots,e_{i+k+1}$ at $v$.  That is, $\alpha_i=\alpha_{i+1}=\cdots =\alpha_{i+k}$ where $\alpha_{i-1}>\alpha_i$ and $\alpha_{i+k+1}>\alpha_i$.  Then
$$\sum_{j=i}^{i+k+1}\mu(e_j) = \left\{
\begin{array}{cl}
0 & \mbox{if $k$ is even,}\\
\pm 1 & \mbox{if $k$ is odd.}
\end{array}\right.$$
\end{theorem}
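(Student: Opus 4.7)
The plan is to prove Theorem~\ref{genMaekawa} by induction on $k$, using the Big-Little-Big Angle Lemma (described in the paragraph before the theorem) as the base case $k = 0$. That case gives $\mu(e_i) = -\mu(e_{i+1})$ directly, so $\mu(e_i) + \mu(e_{i+1}) = 0$, which agrees with the stated formula for $k$ even.

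For the inductive step I would exploit the geometry of the flat-folded state around $v$. Because all $k+1$ sectors $\alpha_i = \cdots = \alpha_{i+k}$ share the same angle $\alpha$ and the paper folds back and forth across each crease, the $k+2$ bounding creases $e_i,\ldots,e_{i+k+1}$ land on exactly two angular rays from $v$ in the folded state: even-indexed creases on one ray and odd-indexed ones on the other, separated by angle $\alpha$. At the same time, the strictly larger outer sectors $\alpha_{i-1}$ and $\alpha_{i+k+1}$ overhang both of these rays, providing an outer envelope that forces the interior layering to nest inside it. At each of the two rays, the coincident creases must satisfy a non-crossing (``taco-taco'') layer-pairing condition. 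Using this, I would identify an innermost pair of creases in the local-min region whose MV signs must be opposite, and then crimp them out to produce a flat-foldable vertex with $k-2$ equal sectors remaining in its local-min region. Applying the inductive hypothesis to this smaller vertex gives the sum $0$ (if $k-2$ is even) or $\pm 1$ (if $k-2$ is odd), and adding the crimped pair's contribution of $0$ yields the claimed value.

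This induction requires two base cases, $k=0$ and $k=1$. For $k=1$ the assertion is that $\mu(e_i), \mu(e_{i+1}), \mu(e_{i+2})$ cannot all be equal, which I would derive from the non-crossing condition: three same-sign coincident creases at a single folded ray (here $e_i$ and $e_{i+2}$, with $e_{i+1}$ on the other ray) would force U-fold brackets in the layer stack that cannot be simultaneously nested inside the outer envelope. The main obstacle is justifying the existence of a crimpable adjacent pair for general $k$: unlike the classical Big-Little-Big setup, where a strict local minimum of a single sector forces opposite MV signs on its bounding creases, our equal sectors have only weak local-min structure internally, so the crimp must be obtained by combining the outer-envelope constraint from $\alpha_{i-1}$ and $\alpha_{i+k+1}$ with the layer-ordering framework for flat-folded vertices developed in \cite{Hull2}, with the parity of $k$ controlling whether the two boundary creases $e_i$ and $e_{i+k+1}$ lie on the same angular ray (if $k$ is odd) or on different rays (if $k$ is even).
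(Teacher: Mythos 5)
The paper itself does not prove Theorem~\ref{genMaekawa}: it states the result and defers the proof to \cite{Hull2}, so there is no in-paper argument to compare against. Your crimp-and-induct strategy is the standard route for this lemma, and your bookkeeping is right: crimping an adjacent pair $e_j,e_{j+1}$ with $\mu(e_j)=-\mu(e_{j+1})$ whose intervening sector lies in the equal-angle run removes a pair contributing $0$ to the sum, merges three sectors into one of angle $\alpha_{j-1}-\alpha_j+\alpha_{j+1}\ge\alpha_i$, and so preserves the local-minimum hypothesis while reducing $k$ by $2$; the parity dichotomy then falls out of the two base cases $k=0$ (Big-Little-Big) and $k=1$.

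The genuine gap — which you flag yourself — is that all of the content of the theorem is concentrated in two claims you do not actually prove: (i) for $k=1$, the three creases $e_i,e_{i+1},e_{i+2}$ cannot all carry the same sign, and (ii) for $k\ge 2$, there exists an adjacent opposite-parity pair in the run that can be crimped away while leaving a \emph{valid} (non-self-intersecting) flat folding of the reduced vertex, so that the inductive hypothesis applies. Both are deferred to ``non-crossing taco-taco conditions'' and ``the layer-ordering framework of \cite{Hull2}'' without being carried out, yet the theorem is precisely a statement about what non-self-intersection forces, so this deferral is the proof, not a technical detail. Two further points need explicit care: not every adjacent opposite-parity pair is removable, since the sector between its creases may have other layers of the stack threaded through it, so you must argue that an innermost such sector exists at one of the two rays and that crimping it preserves validity; and the claim that same-sign creases force un-nestable fold-backs must be turned into an actual argument about the linear order of layers at the two rays together with the fact that the two strictly larger outer sectors traverse the entire slab and hence cannot be trapped by any fold-back. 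Until those steps are written down, what you have is a correct plan rather than a proof.
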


In the Introduction we defined face flips of a flat origami with a MV assignment. 
We also say that two MV assignments $\mu_1$ and $\mu_2$ are {\em face-flippable} if there exists a sequence of faces in the crease pattern  whose flipping will turn $\mu_1$ into $\mu_2$, or vice-versa.  

We now formalize the concept of the configuration space of locally-valid MV assignments, which we do with a variation of the flip graph from discrete geometry.  Given a flat origami crease pattern $(G,P)$, define the {\em origami flip graph} to be the graph whose vertices are locally-valid MV assignments of $(G,P)$, and where two MV assignments $\mu_1$ and $\mu_2$ are adjacent in this graph if and only if  $\mu_1$ is face-flippable to $\mu_2$ by flipping exactly one face.  We then say that the configuration space of MV assignments for $(G,P)$ is connected if its origami flip graph is a connected graph.

\section{Square and kite tessellations}\label{sec2}

In this section we will analyze three families of quadrilateral-based origami tessellations to demonstrate different kinds of face flip behavior and different configuration spaces.  Specifically, we will see:

\begin{itemize}
\item {\em Square grid} crease patterns, where any face can be flipped and the configuration space of MV assignments is connected.
\item {\em Huffman grid} tessellations, where no face flips are possible and thus the configuration space is totally disconnected.
\item {\em Square twist} tessellations, where only half of the faces can be flipped but the configuration space is still connected.
\end{itemize}

\subsection{Square grid tessellations}

We first consider an $m\times n$ grid of squares, denoted $G_{m,n}$, as our crease pattern, where the region of paper $P$ will be an $m\times n$ rectangle.  This will have $(m-1)(n-1)$ vertices in the interior of $P$, each of which will have degree 4 and $90^\circ$ angles between the creases.  Thus a MV assignment $\mu$ for $G_{m,n}$ will be locally valid if and only if each vertex satisfies Maekawa's Theorem, i.e., has 3 mountains and 1 valley or vice-versa.  

\begin{theorem}\label{anygrid}
Given a square grid tessellation and a locally-valid MV assignment $\mu$, flipping any face will result in another locally-valid MV assignment.
\end{theorem}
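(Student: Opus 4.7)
The plan is to reduce local validity at each interior vertex to Maekawa's condition via Theorem~\ref{MaekawaNASC}, and then verify directly that flipping a face preserves Maekawa at every interior vertex. Every interior vertex of $G_{m,n}$ has four equal $90^\circ$ sector angles, so Theorem~\ref{MaekawaNASC} tells us that $\mu$ is locally valid at such a vertex $v$ if and only if $\sum_{e\ni v}\mu(e)=\pm 2$, i.e., the four edges at $v$ split $3$--$1$ between mountains and valleys.

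Next I would fix an arbitrary face $F$ and an arbitrary interior vertex $v$ and split into two cases. If $v$ is not a corner of $F$, then no edge at $v$ is touched by the flip and Maekawa survives trivially. Otherwise $v$ is one of the four corners of $F$, and exactly two of the four edges incident to $v$---namely the two perpendicular edges of $F$ meeting at $v$---are flipped. This is the only case that requires real analysis.

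For this case, let $a,b\in\{-1,+1\}$ be the original $\mu$-values of the two flipped edges and $c,d$ those of the remaining two edges at $v$, so $a+b+c+d=\pm 2$ and the new sum at $v$ equals $-a-b+c+d=\pm 2-2(a+b)$. The key combinatorial observation is that the $3$--$1$ split forced by Maekawa rules out $|a+b|=2$ in the ``wrong'' direction: $a+b=-2$ is incompatible with $a+b+c+d=+2$ (it would force $c+d=4$), and symmetrically $a+b=+2$ is incompatible with an original sum of $-2$. Hence $a+b\in\{0,+2\}$ or $a+b\in\{-2,0\}$ according to the sign of the original vertex sum, and in every case the flipped sum lies in $\{-2,+2\}$.

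I do not anticipate any real obstacle; the proof is entirely local at $v$ and rests on this single combinatorial observation, so the main ``work'' is just the four-line case check above. The one minor bookkeeping point is that vertices lying on the boundary of the $m\times n$ rectangle are not interior vertices of $G_{m,n}$ and therefore impose no Maekawa constraint, so they need not be checked at all; this is immediate from the setup.
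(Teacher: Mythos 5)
Your proof is correct and takes essentially the same approach as the paper's: a purely local check at each affected interior vertex that Maekawa's condition ($\sum\mu=\pm2$) is preserved when the two edges of $F$ at that vertex are negated. Your case split on the value of $a+b$ is equivalent to the paper's split on whether the two flipped edges agree, and if anything you make explicit a small step the paper leaves implicit (that $|a+b|=2$ in the ``wrong'' direction is ruled out by the original Maekawa sum).
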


\begin{proof}
Flipping a face $F$ in $G_{m,n}$ will affect $\mu$ for at most four interior vertices in $P$.  Consider one of them, $v$, and let $e_1,\ldots, e_4$ be the crease edges adjacent to $v$ where $e_1$ and $e_2$ border $F$.  If $\mu(e_1)=\mu(e_2)$ then $\sum_{i=1}^4 \mu(e_i)=-\sum_{i=1}^4\mu_F(e_i)$.  If $\mu(e_1)=-\mu(e_2)$ then $\mu_F(e_1)=-\mu_F(e_2)$ as well.  In both cases we have that Maekawa's Theorem is still satisfied at $v$ under $\mu_F$, and thus $\mu_F$ will be locally-valid.
\end{proof}

\begin{lemma}\label{gridlemma}
Let $\mu_1$ and $\mu_2$ be two locally-valid MV assignments of $G_{m,n}$ and let $v$ be an interior vertex of $G_{m,n}$.  Then among the four edges adjacent to $v$, $\mu_1$ and $\mu_2$ can agree on all four, only 2, or none of the edges, but not on three or one.
\end{lemma}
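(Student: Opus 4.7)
The plan is to reduce the statement to a short parity argument based on Maekawa's theorem. First, I would note that $v$ has degree four in $G_{m,n}$ with all four sector angles equal to $90^\circ$, so \autoref{MaekawaNASC} tells us that local validity at $v$ is equivalent to Maekawa's condition holding at $v$. Consequently both $\mu_1$ and $\mu_2$ assign either three mountains and one valley, or one mountain and three valleys, to the edges $e_1,\ldots,e_4$. The key consequence I want to extract is that the number of mountains at $v$ under each $\mu_j$ is \emph{odd}.

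Next, I would parametrize the comparison of $\mu_1$ and $\mu_2$ at $v$ by two counts: let $d_M$ be the number of edges among $e_1,\ldots,e_4$ on which $\mu_1$ is mountain and $\mu_2$ is valley, and let $d_V$ be the number on which the roles are reversed. Then the total number of disagreements is $d_M+d_V$, while the difference between the mountain counts at $v$ satisfies $m_1-m_2 = d_M - d_V$. Since $m_1,m_2\in\{1,3\}$ are both odd, $d_M-d_V$ is even, and since $d_M+d_V$ and $d_M-d_V$ always share parity, the disagreement count $d_M+d_V$ is also even. Hence the agreement count $4-(d_M+d_V)$ lies in $\{0,2,4\}$, ruling out the values $1$ and $3$.

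I do not foresee any serious obstacle: the lemma is essentially an immediate parity consequence of Maekawa's theorem at a degree-four equiangular vertex. The only care needed is in invoking the correct characterization of local validity at such a vertex, which is supplied by \autoref{MaekawaNASC}, and in setting up the $d_M,d_V$ bookkeeping so that the parity comparison is transparent. To finish, I would also observe that all four possibilities for the agreement count ($0$, $2$, and $4$) actually occur, e.g.\ by taking $\mu_2=\mu_1$ (agreement $4$), $\mu_2=-\mu_1$ (agreement $0$), or by swapping the unique valley of $\mu_1$ with one of its mountains (agreement $2$), confirming the statement is sharp.
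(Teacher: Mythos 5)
Your proof is correct and rests on the same underlying fact as the paper's: Maekawa forces an odd number of mountains (three--one split) among the four edges at $v$, so the number of disagreements between $\mu_1$ and $\mu_2$ must be even. The paper packages this parity argument multiplicatively, noting that $\prod_{i=1}^4\mu(e_i)=-1$ for any locally-valid $\mu$ and hence $\prod_{i=1}^4\mu_1(e_i)\mu_2(e_i)=1$, which contradicts the value $-1$ that an odd number of disagreements would produce; your additive $d_M,d_V$ bookkeeping is just the counting-notation version of the same argument.
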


\begin{proof}
Let the edges at $v$ be $e_1,\ldots, e_4$, and suppose that $\mu_1$ and $\mu_2$ agree on only one or three of these edges.  Then in either case we have that $\prod_{i=1}^4 \mu_1(e_i)\mu_2(e_i) = -1$, since the disagreeing pairs of $\mu_1(e_i)$ and $\mu_2(e_i)$ will each contribute  $-1$ and the agreeing pairs will contribute  $1$ to the product.  However, this product also equals $\prod_{i=1}^4\mu_1(e_i) \prod_{i=1}^4\mu_2(e_i) = 1$ since $\prod_{i=1}^4\mu(e_i)=-1$ for all locally-valid MV assignments $\mu$.  This is a contradiction.
\end{proof}

\begin{figure}
\centerline{\includegraphics[width=\linewidth]{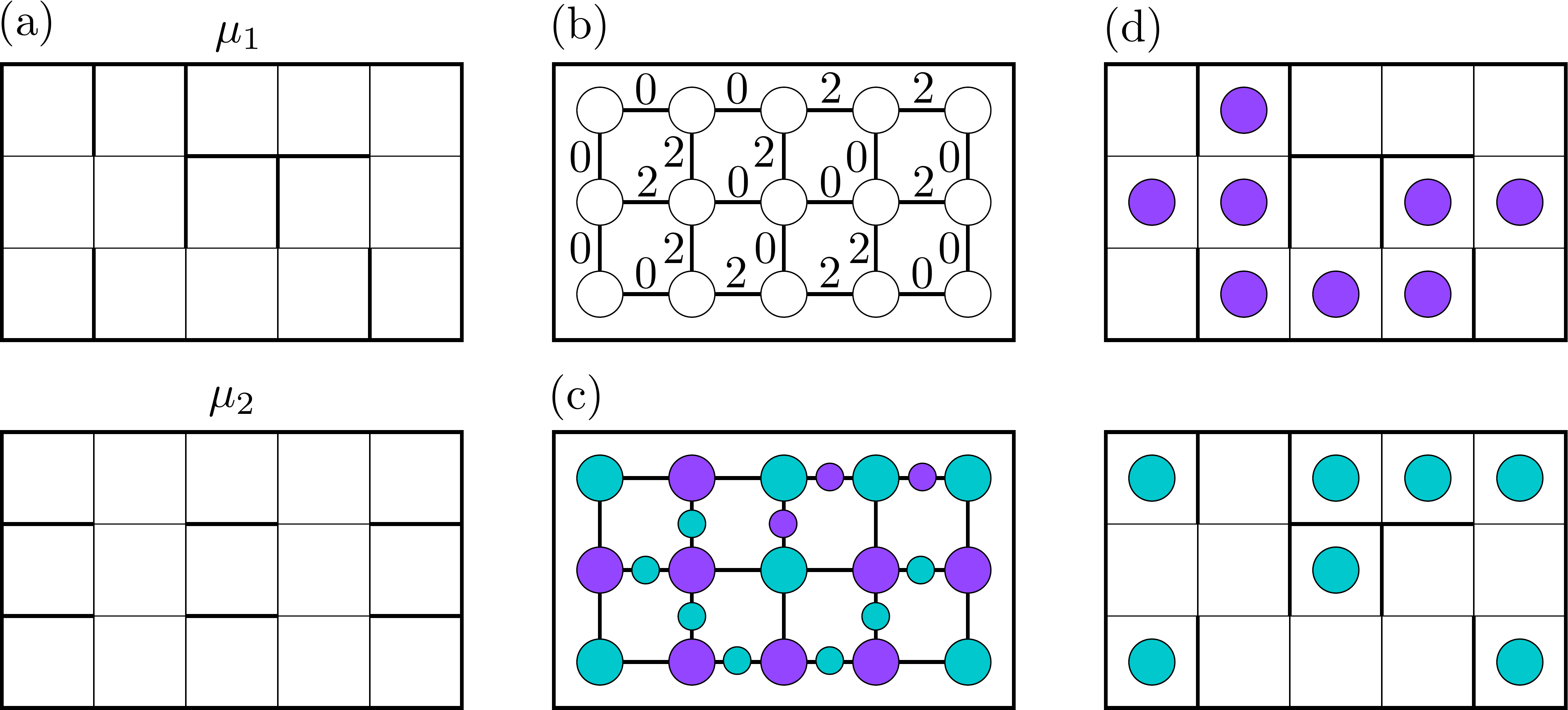}}
\caption{(a) Two locally-valid MV assignments $\mu_1$ and $\mu_2$ of $G_{3,5}$.  (b) The weight $w$ on the edges of $G^*(3,5)$. (c) The graph $\overline{G^*}_{3,5}$ with a 2-coloring. (d) The face flip sets.}\label{fig0}
\end{figure}

Our goal is to prove that the MV configuration space for $G_{m,n}$ is connected and to devise an algorithm to find the smallest number of face flips needed to flip between two given locally-valid MV assignments $\mu_1$ and $\mu_2$.  

To that end, let $\mu_1$ and $\mu_2$ be locally-valid MV assignments of $G_{m,n}$, such as those shown in Figure~\ref{fig0}(a).  We consider the internal planar dual graph, $G^*_{m,n}$ (that is, the dual of $G_{m,n}$ ignoring the external face).  For every edge $e$  of $G_{m,n}$ denote the corresponding edge in $G^*_{m,n}$ by $e^*$.  Assign a weight function $w$ to the edges in $G^*_{m,n}$ given by $w(e^*) = |\mu_1(e)+\mu_2(e)|$.  That is, $w(e^*)$ will equal 0 if $\mu_1(e)\not=\mu_2(e)$ and 2 if $\mu_1(e)=\mu_2(e)$.  See the example in Figure~\ref{fig0}(b).

Now create a new graph $\overline{G^*}_{m,n}$ made by taking $G^*_{m,n}$ and adding a vertex in the middle of every edge $e^*$ with $w(e^*)=2$.  See Figure~\ref{fig0}(c). 

\begin{lemma}\label{2colorgrid}
The graph $\overline{G^*}_{m,n}$ is properly 2-vertex colorable.
\end{lemma}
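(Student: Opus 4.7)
The plan is to show that $\overline{G^*}_{m,n}$ is bipartite by verifying that every cycle in it has even length. I would first observe that the newly inserted subdivision vertices have degree $2$, so cycles in $\overline{G^*}_{m,n}$ are in natural bijection with cycles of the underlying grid graph $G^*_{m,n}$: each edge $e^*$ of $G^*_{m,n}$ with $w(e^*)=2$ contributes $2$ to the length of the cycle in $\overline{G^*}_{m,n}$, while each edge with $w(e^*)=0$ contributes $1$. Consequently the length parity of a cycle $\widetilde C$ in $\overline{G^*}_{m,n}$ matches the parity of the number of $w=0$ edges in its underlying cycle $C$ of $G^*_{m,n}$.

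The next step is to reduce to checking the parity condition on a generating set of cycles. Since $G^*_{m,n}$ is planar with its $(m-1)(n-1)$ bounded faces being exactly the unit $4$-cycles surrounding each interior vertex of $G_{m,n}$, its cycle space over $\mathbb{F}_2$ is generated by these unit face cycles. Thus it suffices to show that each unit $4$-cycle contains an even number of $w=0$ edges, after which taking symmetric differences shows the same holds for every cycle.

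The four dual edges of the unit $4$-cycle at an interior vertex $v$ of $G_{m,n}$ correspond precisely to the four creases at $v$, and their $w$-values encode whether $\mu_1$ and $\mu_2$ agree on those creases. By \autoref{gridlemma}, the number of disagreements at $v$ is $0$, $2$, or $4$—in particular even—so the $4$-cycle has an even number of $w=0$ edges. Combining with the previous step gives that every cycle of $\overline{G^*}_{m,n}$ is even, hence the graph is $2$-colorable.

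I do not anticipate any serious obstacle: the combinatorial work has been front-loaded into \autoref{gridlemma}, and the main idea is simply to package the mod-$2$ local condition at each interior vertex into a global parity condition via the face-generated cycle space of the planar dual. The one spot that merits care is checking that the length parity in $\overline{G^*}_{m,n}$ really only depends on the underlying cycle in $G^*_{m,n}$ (so the bijection between cycles behaves well under the subdivision), which follows immediately from the degree-$2$ nature of the added vertices.
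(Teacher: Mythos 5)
Your proof is correct and follows essentially the same route as the paper: both reduce to the parity statement of \autoref{gridlemma} at each interior vertex (equivalently, at each unit face of $G^*_{m,n}$) and then conclude bipartiteness because all bounded faces of the planar graph $\overline{G^*}_{m,n}$ are even cycles. The only difference is that you make explicit the cycle-space/symmetric-difference step that the paper leaves implicit when it asserts ``thus $\overline{G^*}_{m,n}$ has only even cycles.''
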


\begin{proof}
This follows from Lemma~\ref{gridlemma};  since the four edges $e_i$ adjacent to each internal vertex $v$ of $G_{m,n}$ have $\mu_1(e_i)=\mu_2(e_i)$ for only four, two, or none of the $e_i$, we have that each square face of $G^*_{m,n}$ will either remain a square, become a hexagon, or become an octagon in $\overline{G^*}_{m,n}$.  Thus $\overline{G^*}_{m,n}$ has only even cycles, which means it is properly 2-vertex colorable.
\end{proof}

Let $c:E(\overline{G^*}_{m,n})\to\{$purple, teal$\}$ be a proper 2-vertex coloring of $\overline{G^*}_{m,n}$.  Then $c$ will give us a (most likely not proper) 2-coloring of the vertices of $G^*_{m,n}$.

\begin{theorem}\label{gridsconnected}
Let $\mu_1$ and $\mu_2$ be two locally-valid MV assignments of $G_{m,n}$.  Then $\mu_1$ and $\mu_2$ are face-flippable, and this can be achieved by flipping the faces corresponding to all the purple (or all the teal) vertices in $G^*_{m,n}$ under the 2-coloring $c$ described above.
\end{theorem}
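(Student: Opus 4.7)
The plan is to verify directly that simultaneously flipping every face $F$ of $G_{m,n}$ whose corresponding vertex in $G^*_{m,n}$ receives the color purple under $c$ transforms $\mu_1$ into $\mu_2$; since face flips only toggle MV values on their bordering edges they commute, so the order of flips is irrelevant. The key bookkeeping is that a crease edge $e$ is toggled exactly once for each purple face it borders, so its final value differs from $\mu_1(e)$ if and only if it borders exactly one purple face.

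To execute this, I would fix an interior crease edge $e$, let $e^*$ be its dual in $G^*_{m,n}$, and let $F_1, F_2$ be the two faces of $G_{m,n}$ sharing $e$ (the endpoints of $e^*$). Then I would split into two cases driven by $w(e^*)$. If $\mu_1(e)=\mu_2(e)$, then $w(e^*)=2$, so in $\overline{G^*}_{m,n}$ a middle vertex subdivides $e^*$; the proper 2-coloring $c$ from Lemma~\ref{2colorgrid} then forces $F_1$ and $F_2$ to receive the same color, so $e$ is toggled $0$ or $2$ times and retains the common value $\mu_1(e)=\mu_2(e)$. If $\mu_1(e)\ne \mu_2(e)$, then $w(e^*)=0$, so $e^*$ remains undivided in $\overline{G^*}_{m,n}$ and adjacency forces $c(F_1)\ne c(F_2)$; then $e$ is toggled exactly once, flipping $\mu_1(e)$ to $\mu_2(e)$. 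Either way the resulting MV value on $e$ equals $\mu_2(e)$.

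Finally, I would invoke Theorem~\ref{anygrid} to note that performing these flips one face at a time keeps every intermediate assignment locally valid, giving an actual walk in the origami flip graph from $\mu_1$ to $\mu_2$ and hence face-flippability in the intended sense. The parenthetical claim about teal follows by symmetry: replacing purple by teal produces the complementary flip set, which toggles each edge the same number of times modulo $2$ (edges with monochromatic endpoints are flipped $0$ or $2$ times in either case, and edges with bichromatic endpoints exactly once in either case). I do not anticipate a genuine obstacle here; once Lemma~\ref{2colorgrid} supplies the coloring, the argument reduces to the built-in correspondence $w(e^*)=2 \Leftrightarrow c(F_1)=c(F_2) \Leftrightarrow \mu_1(e)=\mu_2(e)$ encoded by the subdivision construction.
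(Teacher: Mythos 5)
Your proposal is correct and follows essentially the same argument as the paper: a case analysis on $w(e^*)$ showing that the subdivision construction forces same-colored endpoints exactly when $\mu_1(e)=\mu_2(e)$, so each edge is toggled the right number of times mod 2. Your additional remarks (that flips commute, and that Theorem~\ref{anygrid} guarantees every intermediate assignment is locally valid, so the flip sequence is a genuine walk in the origami flip graph) are points the paper leaves implicit, but they do not change the substance of the argument.
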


\begin{proof}
Suppose we start with the MV assignment $\mu_1$ and flip all the faces corresponding to the purple vertices in $G^*_{m,n}$.  Consider an edge $e$ of $G_{m,n}$ where $\mu_1(e)=\mu_2(e)$.  Then $w(e^*)=2$, and thus the faces in $G_{m,n}$ that border $e$ have corresponding vertices in $G^*_{m,n}$ that are both colored purple or teal.  This means that both of these faces were flipped or both were not flipped.  In either case, the edge $e$ remains with the same MV assignment after all the purple flips.

Now consider $e\in E(G_{m,n})$ where $\mu_1(e)\not=\mu_2(e)$. Then $w(e^*)=0$, so the vertices adjacent to $e^*$ in $G^*_{m,n}$ are colored differently under $c$, which means one of the faces bordering $e$ in $G_{m,n}$ is flipped and the other is not flipped.  This implies that the edge $e$ will change its MV assignment after all the purple flips.  We conclude that the MV assignment $\mu_1$ will turn into $\mu_2$ after flipping all the faces corresponding to the purple vertices in $G^*_{m,n}$, and the same argument works for the teal vertices.
\end{proof}

It is clear from the construction that the two face flip sets generated by the purple vertices and by the teal vertices in $G^*_{m,n}$ are minimal in that removing any face from them will not result in flipping from $\mu_1$ to $\mu_2$ or vice-versa.  Could there be some other minimal set of faces that flips from $\mu_1$ to $\mu_2$?  No, since every edge $e$ in $G_{m,n}$ with $\mu_1(e)\not=\mu_2(e)$ requires one of its adjacent faces to be in the flip set, which is exactly what the purple and teal sets achieve.  We conclude that the smallest number of face flips needed to flip from $\mu_1$ to $\mu_2$ is the smaller of the purple and the teal face flip sets.

For a few interesting examples, if $\mu_1=\mu_2$ then one of the color sets, say purple, will include all the faces of $G_{m,n}$ and the other, teal, will be the empty set.  Clearly the empty set is the smaller set, meaning that no flips are needed.  If $\mu_1(e)=-\mu_2(e)$ for all $e\in E(G_{m,n})$ then  the 2-coloring of $G^*_{m,n}$ will be a simple checkerboard coloring, and if $mn$ is even then the purple and teal flip sets will have equal size.

\subsection{Huffman grid tessellations}

A {\em Huffman grid} is a type of monohedral origami tessellation introduced by Huffman in \cite{Huff}.  (See also \cite{Evans}.)  The generating tile is a quadrilateral with two opposite corners having right angles, as in Figure~\ref{fig1}(a).  The other two interior angles are labeled $\alpha$ and $\pi-\alpha$, where we assume $\alpha<\pi/2$.  The tiling generated by this tile, shown in Figure~\ref{fig1}(b), has vertices that satisfy Kawaski's Theorem, and by the Big-Little-Big Lemma, the creases bordering an angle of $\alpha$ must have different MV parity.  The creases that do not border an $\alpha$ angle form zig-zag paths, which we call {\em short rows}; they are highlighted blue in Figure~\ref{fig1}(b).  Applying Maekawa's Theorem at each vertex implies that each short row must be either entirely mountain or entirely valley creases.  As seen in \cite{Evans}, folding large Huffman grids according to a locally-valid MV assignment will eventually cause the paper to curl up and self-intersect.

\begin{figure}
\centerline{\includegraphics[scale=.3]{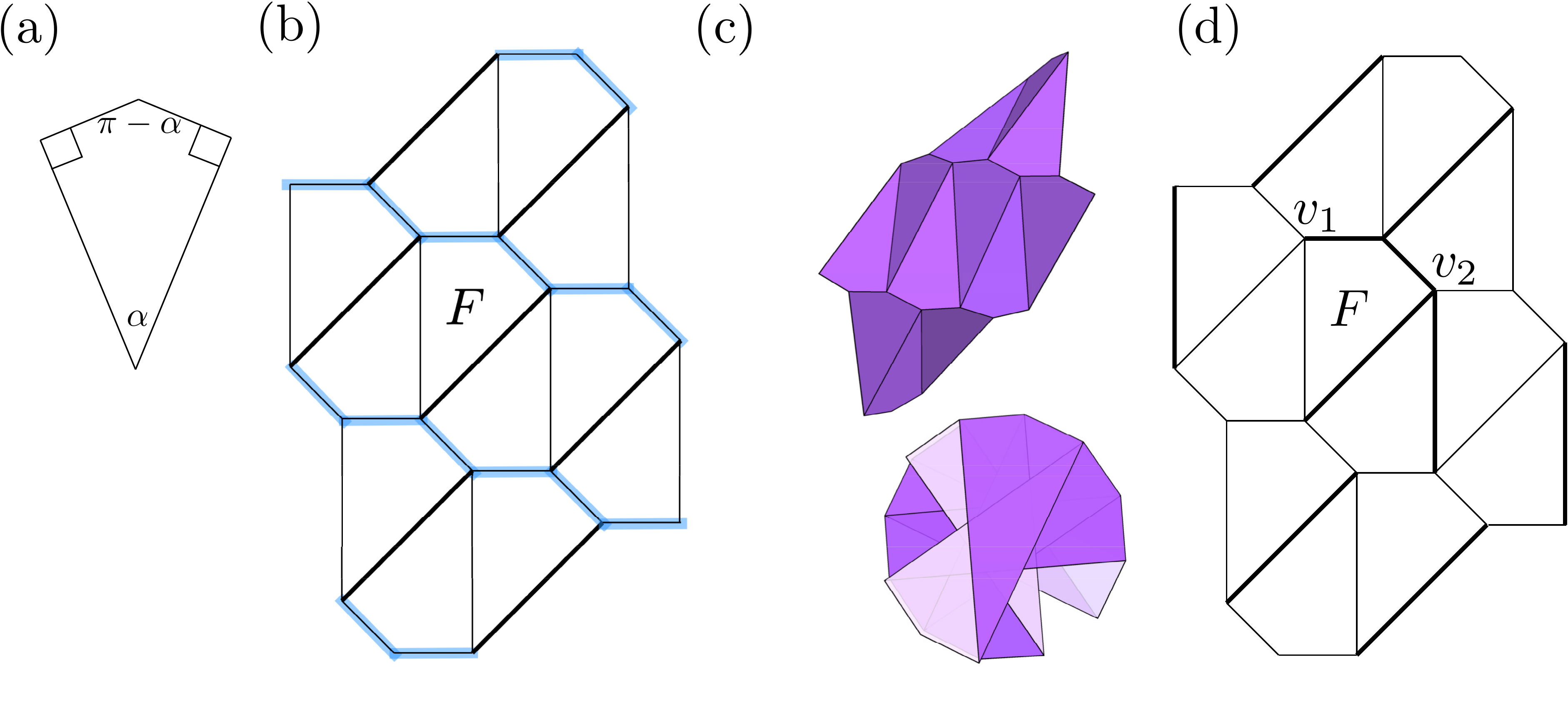}}
\caption[Caption]{(a) The kite tile for a Huffman grid.  (b) The resulting Huffman grid, with the short rows highlighted in blue.  (c) The crease pattern partially folded and folded flat.\footnotemark (d) The face $F$ flipped.}\label{fig1}
\end{figure}

\begin{theorem}\label{Huffmangrid}
Given the Huffman grid tessellation and a MV assignment, any face flip will generate a MV assignment that is not valid.
\end{theorem}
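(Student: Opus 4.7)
The plan is to exhibit, at one corner of the flipped face $F$, a violation of the Big-Little-Big Angle Lemma, so that $\mu_F$ is already locally invalid and hence invalid. First I would pin down the local combinatorics at an interior vertex of the Huffman grid: four copies of the kite tile of Figure~\ref{fig1}(a) meet at each such vertex, and consistency with Kawasaki's Theorem forces the four sector angles to appear in the cyclic order $\pi/2,\alpha,\pi/2,\pi-\alpha$. Since $\alpha<\pi/2<\pi-\alpha$, the $\alpha$-sector is a strict local minimum of the four angles around every vertex, so Theorem~\ref{genMaekawa} (with $k=0$) forces the two creases flanking the $\alpha$-sector at each vertex to carry opposite MV parities under any locally valid assignment.

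Next let $F$ be an arbitrary face and let $A$ be either of the two corners of $F$ at which $F$ contributes the angle $\pi/2$. The two creases of $F$ meeting at $A$ flank $F$'s own $\pi/2$-sector there; consulting the cyclic order $\pi/2,\alpha,\pi/2,\pi-\alpha$ at $A$, this $F$-sector is adjacent on one side to an $\alpha$-sector and on the other to a $(\pi-\alpha)$-sector, both contributed by neighboring faces. The geometric heart of the argument is therefore the observation that exactly one of $F$'s two edges at $A$ is $\alpha$-adjacent at $A$ (namely the one separating $F$'s sector from the neighboring $\alpha$-sector), while the other $\alpha$-adjacent edge at $A$ lies on the boundary of a different face and is therefore not touched by flipping $F$.

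Finally, since $\mu$ is locally valid, the two $\alpha$-adjacent edges at $A$ carry opposite MV parities under $\mu$. The face flip reverses exactly one of them, so under $\mu_F$ they now share a common MV parity, violating Theorem~\ref{genMaekawa} at $A$; hence $\mu_F$ is not locally valid and, in particular, not valid. The main delicate point is the geometric assertion in the second paragraph: one must verify that the cyclic neighbors of $F$'s $\pi/2$-sector at $A$ are exactly one $\alpha$-sector and one $(\pi-\alpha)$-sector rather than two of the same type, which follows from the rigid cyclic order $\pi/2,\alpha,\pi/2,\pi-\alpha$ visible in Figure~\ref{fig1}(b) and is the only step where the proof genuinely uses the specific combinatorial structure of the Huffman tiling.
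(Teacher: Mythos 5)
Your proof is correct and follows essentially the same route as the paper's: flip $F$, look at a right-angle corner of $F$, and observe that exactly one of the two creases flanking the unique $\alpha$-sector there lies on $F$, so the flip gives them equal parity and violates the Big-Little-Big Lemma. You simply make explicit the cyclic order $\pi/2,\alpha,\pi/2,\pi-\alpha$ around each interior vertex, a point the paper's proof leaves implicit.
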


\footnotetext{The images in Figure~\ref{fig1}(c) were generated using R. J. Lang's {\em Tessellatica 11.1} Mathematica code \cite{TessLang}.}

\begin{proof}
Suppose we flip a face $F$ where we label $F$'s right angle corners $v_1$ and $v_2$.  Then before we flipped $F$, the two creases surrounding the angle $\alpha$ at $v_1$ had opposite MV parity.  After flipping $F$ these creases will have the same MV parity, which violates the Big-Little-Big Lemma at $v_1$, and the same thing will happen at $v_2$.  Thus if $\mu$ was our original locally-valid MV assignment, the flipped assignment $\mu_F$ will not be locally valid.
\end{proof}

Theorem~\ref{Huffmangrid} implies that there are no edges in the face flip graph for Huffman grid tessellations, and thus its configuration space is as disconnected as possible.

\subsection{Square twist tessellations}

Twist folds, which are collections of creases that, when folded flat, cause a polygon to rotate from the unfolded to the flat-folded state relative to the rest of the paper, are of interest for their geometric character \cite{Evans2} as well as their applications in origami mechanics \cite{Silverberg}, as well as their ability to be used in tessellations.  The classic {\em square twist}, where identical degree-4 vertices with sector angles $45^\circ$, $90^\circ$, $135^\circ$, and $90^\circ$ are used to twist a square, can tessellate in several different ways; a few examples are shown in Figure~\ref{fig2}.  In all square twist tessellations the $90^\circ$ angles will form square or restangle faces in the crease pattern tiling, while the  $45^\circ$ and $135^\circ$ angles will form parallelograms or trapezoids, as shown in Figure~\ref{fig2}(b).  

The Big-Little-Big Lemma implies that the  two creases bordering the $45^\circ$ angles in the vertices of a square twist tessellation must have different MV assignments.  This immediately gives us that if we were to face flip any one of the square or rectangle faces in a square twist tessellation then the Big-Little-Big Lemma would be violated at the $45^\circ$ angles that border the flipped face.  Thus any square or rectangle face cannot be flipped by itself.  In fact, the only way to flip a square or rectangle face and not violate the Big-Little-Big Lemma somewhere is to flip {\em all} of the square and rectangle faces, which is equivalent to flipping all the non-square/rectangle faces.

\begin{figure}
\centerline{\includegraphics[width=\linewidth]{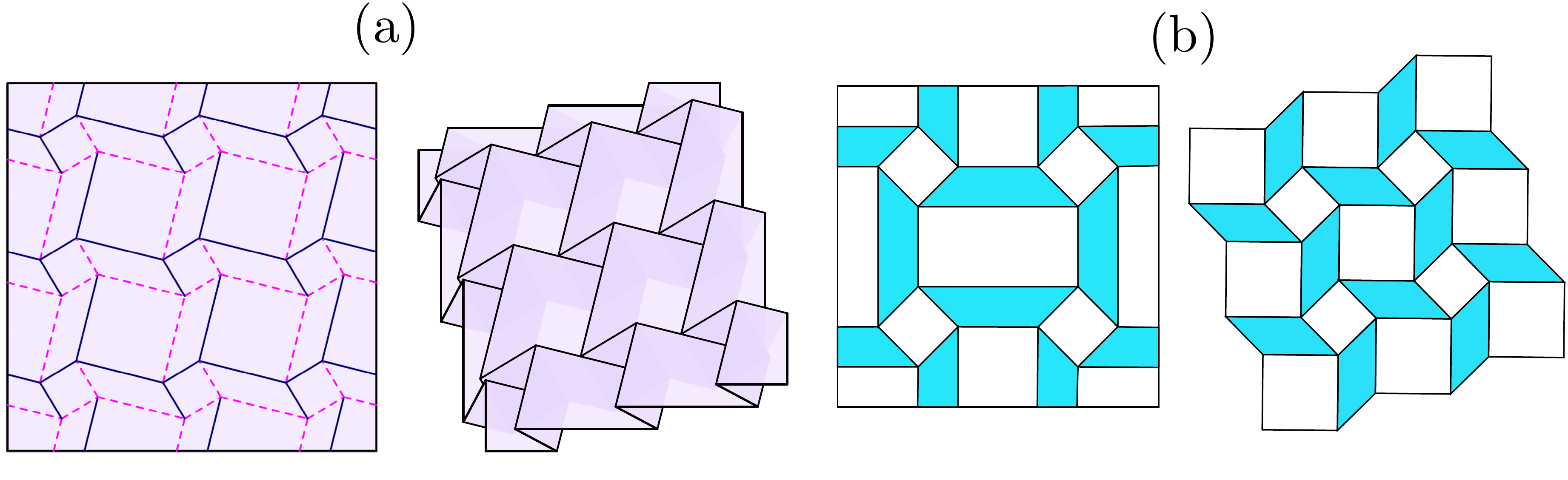}}
\caption[Caption]{(a) A square twist tessellation with MV assignment and the folded form.\footnotemark  (b) A square twist tessellation with the flippable faces colored.}\label{fig2}
\end{figure}

Therefore the only faces that are safe to flip (and preserve local validity of the MV assignment) in a square twist crease pattern are the parallelograms and trapezoids, shown in blue in Figure~\ref{fig2}(b).  Since all of these parallelograms and trapezoids are edge-disjoint, and these represent the only ways to modify a MV assignment of such patterns, we arrive at the following:

\begin{theorem}\label{squaretwist}
The MV assignment configuration space of a square twist tessellation crease pattern is connected, and the only faces that are flippable are the parallelograms and trapezoids.
\end{theorem}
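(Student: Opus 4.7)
The theorem asks us to establish two facts: that the only faces flippable in isolation while preserving local validity are the parallelograms and trapezoids, and that the configuration space is connected under these flips. The first is essentially handled in the paragraphs preceding the theorem, since the Big-Little-Big Lemma at any $45^\circ$ sector forbids flipping either of the two incident square or rectangle faces alone. My plan for connectedness is to show that on each parallelogram or trapezoid $P$, the locally-valid MV assignments restricted to $\partial P$ form exactly two configurations that differ by simultaneously negating all four boundary edges of $P$; combined with the fact that the parallelograms and trapezoids are edge-disjoint and together cover every crease, this implies that any two locally-valid MV assignments $\mu_1$ and $\mu_2$ can be interconverted by flipping exactly the set of such faces on which they disagree.

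The core step is a local analysis at an arbitrary parallelogram or trapezoid $P$, whose four interior angles are necessarily $45^\circ$ or $135^\circ$ (with two of each, since they sum to $360^\circ$). Labeling the boundary edges of $P$ cyclically $e_1,e_2,e_3,e_4$, I would examine each corner of $P$ separately. At a corner where $P$ has interior angle $45^\circ$, the $45^\circ$ sector of the vertex lies inside $P$, so Big-Little-Big directly forces the two edges of $P$ meeting at that corner to have opposite MV parity. At a corner where $P$ has interior angle $135^\circ$, the $45^\circ$ sector lies across the vertex between two edges that do not belong to $\partial P$; Big-Little-Big constrains those two non-$P$ edges to opposite parity, and Maekawa's Theorem at that vertex then forces the two edges of $P$ there to have equal parity. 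Propagating these constraints around $\partial P$, separately for the parallelogram corner sequence $(45^\circ,135^\circ,45^\circ,135^\circ)$ and the trapezoid corner sequence $(45^\circ,135^\circ,135^\circ,45^\circ)$ (the only two cyclic arrangements of two $45^\circ$ and two $135^\circ$ angles), yields a single free binary parameter in each case, and flipping $P$ toggles this parameter.

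To finish, I would first observe that every edge of the crease pattern lies on the boundary of exactly one parallelogram or trapezoid: around each vertex the four sectors alternate between $\{45^\circ,135^\circ\}$-sectors (which lie inside parallelograms or trapezoids) and $90^\circ$-sectors (which lie inside squares or rectangles), so every edge separates one of each type. The two-state property then forces any two locally-valid MV assignments $\mu_1$ and $\mu_2$ to either agree on all four boundary edges of a given parallelogram or trapezoid, or to disagree on all four. Taking $S$ to be the set of parallelograms and trapezoids where they disagree, the faces in $S$ are edge-disjoint and each single flip preserves local validity by the first part of the theorem, so performing the flips in $S$ in any order yields a sequence of locally-valid MV assignments that transforms $\mu_1$ into $\mu_2$.

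The main obstacle I anticipate is the case analysis at individual corners of $P$: one must carefully distinguish whether the $45^\circ$ sector of a vertex lies inside $P$ or outside $P$, since Big-Little-Big acts directly on $P$'s edges in the first case but only indirectly via Maekawa in the second, and the propagation must be run separately for the parallelogram and trapezoid corner patterns. In both patterns, however, the local constraints collapse the four binary MV choices on $\partial P$ down to a single binary parameter, which is exactly the bit toggled by a face flip; this collapse is the essence of why flips of parallelograms and trapezoids alone suffice to connect the configuration space.
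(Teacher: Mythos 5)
Your proposal is correct and follows essentially the same route as the paper, which likewise derives non-flippability of the square/rectangle faces from the Big-Little-Big Lemma at the $45^\circ$ sectors and then gets connectivity from the fact that the parallelograms and trapezoids are edge-disjoint and carry the only remaining freedom in a locally-valid MV assignment. Your explicit corner-by-corner propagation showing that each parallelogram or trapezoid boundary admits exactly two states (toggled by a flip) is a more detailed justification of the step the paper states only informally, but it is the same argument.
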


\footnotetext{The images in Figure~\ref{fig2}(a) were generated using R. J. Lang's {\em Tessellatica 11.1} Mathematica code \cite{TessLang}.}

\section{The Miura-ori}\label{sec3}

The Miura-ori~\cite{GHull,Van14} is a folding pattern named after Koryo Miura, formed by a sequence of equally spaced parallel lines
crossed by zigzag paths that divide the paper into equal parallelograms. These parallelograms tile the plane by reflection across the parallel lines, and by translation in the direction parallel to the parallel lines. In its classical global flat-folded state, the folds of the Miura-ori alternate between mountain and valley folds along each of the parallel lines, with each zigzag path consisting entirely of mountain folds or entirely of valley folds (in alternation along the sequence of zigzag paths). However, the same folding pattern has many locally flat-folded states. In a locally flat-folded state of this folding pattern, each vertex must have three folds of one type (mountain or valley) and one fold of the opposite type. Additionally, if two of the folds of the same type come from the zigzag path through the vertex, the third fold of the same type must be on the remaining edge that is farthest (by angle) from these two edges.

The parallelograms of the Miura-ori have the same combinatorial structure (although different in their symmetry groups) as the square grid, and it will be helpful for us to use a combinatorial bijection between local flat foldings of the Miura-ori and 3-colorings of the vertices of a grid graph. To construct the bijection, we think of the grid graph as being the dual graph of the Miura-ori (with one vertex in each parallelogram) and the three colors as corresponding to the three integer values modulo three. We then follow a path through the parallelograms of the Miura-ori, starting from one corner continuing as long as possible in the direction parallel to the parallel fold lines of the pattern. Whenever this path reaches the opposite side of the folded paper, it makes a step across one of these parallel fold lines and then continues, as long as possible in the opposite direction parallel to this fold line. We choose the color corresponding to 0 mod 3 for the first parallelogram in the path. Then, when the path crosses a mountain fold, we add one mod 3, and when it crosses a valley fold, we subtract one mod 3. (See \autoref{fig:Miurabijection}.) It can be shown that this method of coloring the parallelograms produces a valid 3-coloring for every local flat-folding of the Miura-ori and, conversely, that every 3-coloring comes from a local flat-folding in this way~\cite{GHull,Ball}.

\begin{figure}[t]
\centering\includegraphics[width=0.8\textwidth]{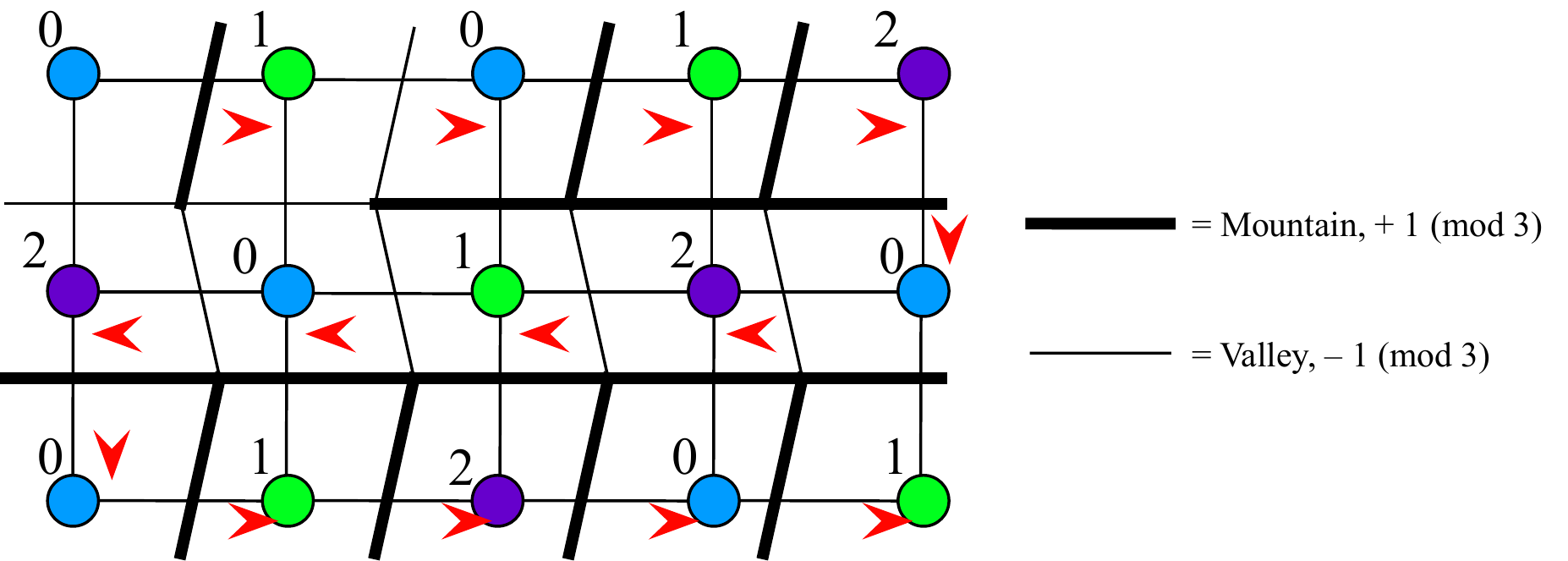}
\caption{Bijection between local flat foldings of the Miura-ori and 3-colorings of the squares of a grid}
\label{fig:Miurabijection}
\end{figure}

In order to use this, we need to know how face flips change the  3-coloring of the corresponding grid graph.

\begin{lemma}
\label{lem:miura-flip-3color}
Suppose we flip a face $F$ of a Miura-ori with a locally flat-foldable MV assignment.  Then the corresponding 3-coloring of the grid graph will change only at the vertex that corresponds to $F$.
Conversely, if we change the 3-coloring of a single vertex of the grid graph, the corresponding Miura-ori MV assignment will change by flipping a single face.
\end{lemma}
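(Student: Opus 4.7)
The plan is to exploit path-independence of the 3-coloring bijection. Given any locally flat-foldable MV assignment $\mu$, the color of a parallelogram $G$ can be computed by picking \emph{any} dual-graph path from the base parallelogram to $G$ and accumulating $+1 \pmod 3$ per mountain crease crossed and $-1 \pmod 3$ per valley crease crossed; that this value is independent of the chosen path is precisely what lets the construction define a proper 3-coloring of the grid graph.

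For the forward direction, I would assume that $\mu_F$ is itself locally flat-foldable, so that its 3-coloring $\chi'$ is defined (call the 3-coloring of $\mu$ simply $\chi$). For any parallelogram $G \neq F$, I would compute $\chi'(G)$ along a dual path from the base to $G$ that avoids the vertex $v_F$ corresponding to $F$. Such a path exists because deleting a single vertex from the dual grid graph leaves it connected. No crease of $F$ is crossed along this path, so the accumulated contributions under $\mu$ and $\mu_F$ are identical, giving $\chi'(G) = \chi(G)$. For $F$ itself, any dual path must end by crossing exactly one crease $e$ of $F$, whose signed contribution is negated by the flip, so $\chi'(F) - \chi(F) \equiv \pm 1 \pmod 3$; in particular $\chi'$ and $\chi$ differ only at $v_F$.

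For the converse, suppose $\chi'$ is obtained from $\chi$ by recoloring only the single vertex $v_F$. The corresponding MV assignment $\mu'$ is recovered from adjacent color differences: creases whose two incident faces are both distinct from $F$ have both endpoint colors unchanged and hence the same MV, while creases of $F$ have exactly one endpoint color changed and thus their mod-$3$ color difference is negated, which precisely swaps mountain and valley. Hence $\mu' = \mu_F$.

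The main obstacle is just verifying the connectivity claim used in the $F$-avoiding path argument; this is immediate for interior faces of an infinite or sufficiently large Miura-ori, and should receive a brief separate remark for boundary or corner faces of a finite Miura-ori, where the dual grid minus a single vertex still remains connected. A secondary point worth stating explicitly is that the argument takes path-independence of the coloring as a black-box inherited from the bijection being a well-defined proper 3-coloring, rather than re-deriving it from the local flat-foldability conditions at each Miura-ori vertex.
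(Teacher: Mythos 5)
Your overall strategy---extend the $\pm 1 \pmod 3$ accumulation rule from the single defining snake path to arbitrary dual paths, then compute the color of every face other than $F$ along a path avoiding $v_F$---is exactly the route the paper takes. The gap is in where you source path-independence. You assert it is ``precisely what lets the construction define a proper 3-coloring,'' but the construction is well-defined without it (the color of each face is computed along the one prescribed snake path), and properness of the resulting coloring only tells you that adjacent faces receive distinct colors, i.e.\ colors differing by $\pm 1 \pmod 3$; it does not tell you that the \emph{sign} of that difference agrees with the mountain/valley type of the separating crease for the dual edges \emph{not} on the snake path. That agreement is the actual content of the lemma, and the paper proves it by orienting every dual edge (each edge crossing a given parallel fold line inherits the direction of the unique snake-path edge crossing that line) and then performing a local case analysis of the admissible MV patterns at each degree-4 vertex of the Miura-ori. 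Without some such argument, your ``compute $\chi'(G)$ along an $F$-avoiding path'' step has no justification, because nothing yet shows that an arbitrary path computes the same value as the snake path.

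A second, smaller issue: ``accumulate $+1$ per mountain crossed and $-1$ per valley crossed'' is not well-defined for an undirected walk---traversing the same mountain crease in the two opposite directions must contribute opposite amounts, or else a path that backtracks would change the color. You need the consistent orientation of the dual grid described above before ``any dual path'' makes sense. Once both points are repaired, the rest of your argument (connectivity of the dual grid minus one vertex, the $\pm 1$ change at $F$ itself, and the converse via adjacent color differences) goes through and matches the paper's proof.
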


\begin{proof}
The correspondence between local flat-foldings and 3-colorings, with colors differing by $+1$ mod 3 across mountain folds and $-1$ mod 3 across valley folds, can be extended from the path described above to a directed version of the entire grid, as follows. Each grid edge that belongs to the path maintains its direction from the path itself. Each remaining grid edge crosses a parallel line of the folding pattern. The same parallel line is crossed by exactly one directed edge of the path, and we assign all of the other edges across that line the same direction.
A local case analysis, in which we consider the possible local flat foldings for the four edges surrounding each vertex of the folding pattern, shows that the correspondence between local flat-foldings and 3-colorings is consistent throughout the folding pattern.

Now, if we flip face $F$, the colors of the surrounding faces do not change, because they are all connected to each other in this directed grid graphs via paths that do not pass through the vertex corresponding to the flipped face.
Conversely, if we change a single grid graph vertex color, the same correspondence shows that the MV assignment cannot change except at the edges surrounding the corresponding face~$F$.
\end{proof}

With this equivalence between flips and vertex recolorings in hand, we can apply known techniques for grid colorings to obtain the corresponding results for Miura-ori face flips.

\begin{theorem}
Every two locally-valid MV assignments of the Miura-ori crease pattern can be converted to each other via face flips.
\end{theorem}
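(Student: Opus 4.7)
The plan is to invoke \autoref{lem:miura-flip-3color} to reduce the theorem to a combinatorial statement about the grid graph $G^*$ dual to the Miura-ori: any two proper 3-colorings of $G^*$ can be connected by a sequence of single-vertex recolorings that passes only through proper 3-colorings.

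To prove this, I would associate to each proper 3-coloring $c$ a height function. Fix a corner vertex $v_0$ and set $h(v_0)=0$; extending along paths from $v_0$, each edge traversal adds $+1$ to $h$ if the color increases by $1\pmod 3$ and $-1$ otherwise. Because $G^*$ is a simply-connected grid, $h$ is well-defined, and it satisfies $|h(u)-h(v)|=1$ across every edge, with $h(v)\equiv c(v)-c(v_0)\pmod 3$. Crucially, a vertex $v$ admits a legal recoloring precisely when all its neighbors share a common color, equivalently when $v$ is a strict local extremum of $h$; such a recoloring changes $h(v)$ by $\pm 2$ and leaves $h$ fixed elsewhere.

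The connectivity argument then proceeds by a gradient-flow reduction to a canonical ground state. Let $c^*$ denote the two-colored configuration whose height function alternates between $0$ and $1$ on the bipartition classes of $G^*$. I would show that any coloring $c$ can be transformed into $c^*$ by repeatedly picking a vertex $v$ at which $h$ attains its global maximum: since $|h(u)-h(v)|=1$ on each incident edge and $v$ is a global maximum, every neighbor $u$ satisfies $h(u)=h(v)-1$, so $v$ is a local maximum and may be legally flipped downward. Each such flip strictly decreases the integer quantity $\sum_v h(v)$, which is bounded below (since $|h(v)|\le\operatorname{dist}(v,v_0)$), so the process terminates, necessarily at $c^*$. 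Reducing both $c_1$ and $c_2$ to $c^*$ in this way and reversing the second sequence produces a face-flip sequence from $c_1$ to $c_2$.

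The main obstacle will be handling the basepoint normalization: if $c_1(v_0)\ne c_2(v_0)$, their height functions live on different ``sheets'' and their canonical ground states differ. I would address this with a short preliminary alignment step that equalizes the basepoint colors, exploiting the fact that $v_0$ is a corner with only two neighbors and can be recolored after a bounded number of moves at or near $v_0$ to make its two neighbors agree. A secondary technical point is verifying that the global maximum is always attained at a vertex that is truly a local maximum of $h$ on $G^*$ (not merely a maximum among some subset), which follows directly from the Lipschitz property $|h(u)-h(v)|=1$.
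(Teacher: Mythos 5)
Your reduction to 3-coloring reconfiguration via \autoref{lem:miura-flip-3color} matches the paper's first step, but from there you diverge: the paper simply cites the known result of Goldberg et al.\ that any two 3-colorings of a grid are connected by single-vertex recolorings, whereas you attempt a self-contained height-function argument (which is in spirit the Luby et al.\ machinery the paper reserves for the \emph{next} theorem, on minimum-length sequences). Your setup is sound: the equivalence ``$v$ is legally recolorable $\iff$ all neighbors of $v$ share a color $\iff$ $v$ is a strict local extremum of $h$'' is correct, since two neighbors at heights $h(v)+1$ and $h(v)-1$ differ in height by $2\not\equiv 0\pmod 3$ and hence in color.

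The genuine gap is in the termination argument. The rule ``repeatedly flip a vertex where $h$ attains its global maximum'' never gets stuck: a global maximum is always a strict local maximum (by the Lipschitz property you note), so it can always be flipped downward, and the process runs forever, drifting the entire configuration downward through an infinite sequence of two-level states. Your potential $\sum_v h(v)$ does not rescue this: if you track $h$ continuously through the flips it is strictly decreasing but \emph{unbounded below} (the bound $|h(v)|\le\operatorname{dist}(v,v_0)$ only holds for the height function renormalized to $h(v_0)=0$, and the evolved function loses that normalization once $v_0$ itself is flipped); if instead you renormalize after every step, the bound holds but the potential is no longer monotone (flipping $v_0$ down and renormalizing adds $2$ to every other vertex, increasing the sum). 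What is missing is a stopping criterion tied to the target: e.g., lower a global maximum only while $\max h>1$ and raise a global minimum only while $\min h<0$ (these do not interfere, since lowering a max $M\ge 2$ lands at $M-2\ge 0$), or equivalently use the potential $\sum_v|h(v)-g(v)|$ against the target ground-state height function $g$, which is exactly the paper's device in its minimum-length theorem. The basepoint-alignment issue you flag is real but secondary, and is handled more cleanly by fixing the target height function (and hence the target ground-state coloring) in advance rather than by ad hoc moves near the corner.
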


\begin{proof}
This follows from \autoref{lem:miura-flip-3color}
and from the already-known fact that every two 3-colorings of a grid can be converted to each other via single-vertex recolorings~\cite{Goldberg}.
More strongly, Goldberg et al.~\cite{Goldberg} prove that the number of recoloring steps needed, for an $m\times n$ grid with $m\le n$, is at most $2mn^2$. The same bound holds for face flips of the Miura-ori, where now $m$ and $n$ measure the number of parallelograms in the pattern in either direction.
\end{proof}

\begin{theorem}
Given any two locally-valid MV assignments of the Miura-ori crease pattern, it is possible to find
a minimum-length sequence of face flips that converts one to the other, in polynomial time.
\end{theorem}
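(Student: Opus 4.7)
The plan is to use \autoref{lem:miura-flip-3color} to reduce the task to finding a shortest single-vertex recoloring sequence between two proper 3-colorings $c_1,c_2$ of the $m\times n$ dual grid $D$, and to solve that problem via an integer-valued height function, in the style of the height-function theory for rhombus tilings. First I would compute $c_1$ and $c_2$ in linear time from $\mu_1,\mu_2$ using the bijection preceding \autoref{lem:miura-flip-3color}, choosing the starting color so that $c_1(r)=c_2(r)$ at some fixed corner $r$ of $D$. Since $D$ is simply connected and bipartite and adjacent colors in a proper 3-coloring differ by $\pm 1\pmod 3$, each $c_i$ lifts uniquely to an integer height function $h_i:V(D)\to\mathbb{Z}$ with $h_i(r)=c_i(r)$, $h_i(v)\equiv c_i(v)\pmod 3$, and $|h_i(u)-h_i(v)|=1$ on every edge $uv$; each $h_i$ is built by BFS from $r$. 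Because $h_1(r)=h_2(r)$ and both heights step by $\pm 1$ along any path, $h_1(v)-h_2(v)$ is always an even integer.

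Next I would analyse how a face flip acts on these heights. By \autoref{lem:miura-flip-3color} a flip recolors one grid vertex $v$ to the color missing from its neighborhood, which is legal only when all neighbors share a single color; this forces them to share a single height, so $v$ is a strict local extremum of $h$. A short case check then shows the recoloring changes $h(v)$ by exactly $-2$ (at a local maximum) or $+2$ (at a local minimum), and leaves every other height unchanged. Consequently every flip changes $\sum_v |h(v)-h_2(v)|$ by at most $2$, giving the lower bound
\[
   \text{(minimum flips)}\;\ge\;\tfrac12\sum_{v\in V(D)}\bigl|h_1(v)-h_2(v)\bigr|.
\]

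To attain this bound I propose the following greedy algorithm: set $h:=h_1$; while $h\ne h_2$, if $\max_v(h-h_2)(v)>0$ let $S$ be the set of vertices attaining this maximum and pick $v\in S$ that also maximises $h_2$ on $S$; otherwise pick a vertex attaining a negative minimum of $h-h_2$ and minimising $h_2$ there. Then flip the face corresponding to $v$. The main obstacle is to prove that the chosen $v$ is genuinely a strict local maximum (respectively minimum) of $h$, so that the flip is legal. In the maximum case, for any neighbor $u\notin S$ the strict gap $(h-h_2)(v)-(h-h_2)(u)\ge 2$ combined with $h(v)-h(u),\,h_2(v)-h_2(u)\in\{\pm 1\}$ forces $h(v)-h(u)=+1$; for a neighbor $u\in S$ we have $h(v)-h(u)=h_2(v)-h_2(u)$, and because $v$ maximises $h_2$ on $S$ while $h_2(v)\ne h_2(u)$ by properness of $c_2$, this difference must equal $+1$. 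A symmetric argument handles the minimum case. Each greedy flip decreases $\sum_v |h(v)-h_2(v)|$ by exactly $2$, so the procedure terminates after $\tfrac12\sum_v|h_1(v)-h_2(v)|=O(mn\cdot\max(m,n))$ steps; since each step runs in polynomial time, the whole algorithm is polynomial.
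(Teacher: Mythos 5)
Your overall route---reduce to grid $3$-colorings via \autoref{lem:miura-flip-3color}, lift each coloring to an integer height function, bound the number of flips below by half the $\ell_1$-distance between heights, and attain the bound by greedily recoloring strict local extrema---is the same as the paper's, and the parts you verify carefully (evenness of $h_1-h_2$, the check that your greedily chosen vertex really is a strict local extremum, the exact decrease of the potential by $2$ per flip) are correct. But there is a genuine gap in the lower bound, and it is precisely the issue the paper's proof devotes its final sentences to: a height function for a fixed coloring is unique only up to adding a multiple of $3$, and a flip sequence that ends at the coloring $c_2$ ends at \emph{some} lift $h_2+3t$ of $c_2$, not necessarily at the lift you pinned down by normalizing $h_2(r)=c_2(r)$ at the corner. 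Your potential argument only shows that a sequence ending at the lift $h_2+3t$ has length at least $\tfrac12\sum_v\lvert h_1(v)-h_2(v)-3t\rvert$; the true minimum is the minimum of this quantity over all integers $t$, attained when $3t$ is a median of the multiset $\{h_1(v)-h_2(v)\}$, which need not be $0$. Fixing the corner only forces $0$ to be one of these values, not their median. For example, on a long strip with $c_1$ the staircase coloring $0,1,2,0,1,2,\dots$ and $c_2$ the alternating coloring $0,1,0,1,\dots$, the differences $h_1-h_2$ grow linearly along the strip, and the $t=0$ sum exceeds the optimum by a constant factor, so your algorithm returns a legal but non-minimum sequence.

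The repair is the one in the paper: since $\lvert (h_1-h_2)(u)-(h_1-h_2)(v)\rvert\le 2$ across each edge, the number of relevant translations $t$ is linear in the larger grid dimension, so you can run your greedy procedure once for each candidate target $h_2+3t$ and output the shortest sequence found. With that single addition your argument matches the paper's and is complete.
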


\begin{proof}
The correspondence between MV assignments and grid 3-colorings, described above, can be carried out in polynomial time. We may then employ a height function on grid 3-colorings described by Luby et al.~\cite{Luby} to find a minimum-length sequence of recolorings that take the pair of colorings to a common third coloring.
This height function maps grid vertices to integers that are equal, modulo 3, to the value of the color on that vertex, and that differ by $\pm 1$ between adjacent vertices. There is a unique such function, up to a translation (the choice of the height for any one designated vertex), and it is easily calculated from the coloring. Whenever two colorings differ by a recoloring step, their height functions differ by adding or subtracting 2 from the height of a vertex.

Any sequence of recoloring steps that takes one coloring to another takes their height functions into each other (again, modulo a translation). The number of steps is therefore at least $1/2$ times the sum, over all grid vertices, of the differences in heights. There always exists a sequence of recoloring steps whose length is exactly this long, obtained by choosing a vertex whose height is a local minimum or maximum and that is not at the goal height and recoloring that vertex. This shortest recoloring sequence could be obtained in polynomial time, from the starting and ending height functions. However, to obtain it from the starting and ending grid colorings we must determine the unknown translation by which the starting and ending height functions differ. The number of choices of this translation is linear in the maximum side length of the grid, so we may try all possibilities in polynomial time.
\end{proof}

\section{Triangle lattice tessellations}\label{sec4}

In this section we consider finite regions of the triangle lattice.
In Section~\ref{sec:triangle-reconfig} we show that the configuration space of locally-valid MV assignments is connected under face flips, with a linear diameter.  
In Section~\ref{sec:triangle-NP-hard} we show that it is NP-hard to find the minimum number of face flips to reconfigure between two locally-valid MV assignments.

We begin with some basic facts about locally-valid MV assignments in the triangle lattice.  
By Maekawa's Theorem, a mountain-valley assignment is locally valid if and only if 
for any vertex $v$,  either $v$ has 4 mountain folds and 2 valley folds, in which case we call it a \emph{mountain vertex}, or $v$ has 4 valley  folds and 2 mountain folds, in which case we call it a \emph{valley vertex}.  

Consider what happens at a valid vertex $v$ if we flip an incident face $f$.  If the two edges of $f$ incident to $v$ have opposite creases (one mountain and one valley),
then flipping $f$ preserves that property and $v$ remains valid---in fact, $v$ retains its mountain/valley designation
(Figure~\ref{fig:triangles}(a)).  
If the two edges of $f$ incident to $v$ are mountain creases and $v$ is a mountain vertex, then flipping $f$ changes $v$ to a valley vertex (Figure~\ref{fig:triangles}(b)).  Similarly, flipping two valley creases at a valley vertex creates a mountain vertex.
Finally, if the two edges of $f$ incident to $v$ are mountain creases and $v$ is a valley vertex (Figure~\ref{fig:triangles}(c)), or if the two edges are valley creases and $v$ is a mountain vertex (Figure~\ref{fig:triangles}(d)), then $f$ cannot be flipped, and we say that $v$ \emph{causes} $f$ to be not flippable.  To summarize:

\begin{claim}
A face can be flipped unless it has 2 mountain creases incident to a valley vertex, or 2 valley creases incident to a mountain vertex.
\label{lemma:bad-face}  
\end{claim}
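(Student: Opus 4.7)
The plan is to formalize the local case analysis that was sketched in the paragraph immediately preceding the claim. The key observation is that flipping a triangular face $f$ alters exactly the three crease assignments along its boundary, and each vertex $v$ of $f$ is incident to exactly two of those edges. Consequently, the effect of the flip at $v$ depends only on the prior types of those two edges and on $v$'s current mountain/valley designation, so validity can be checked one vertex at a time.

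At a single vertex $v$ of $f$, I would invoke Maekawa's theorem in the form used at the start of the section: local validity in the triangle lattice is equivalent to having either $(4M,2V)$ or $(2M,4V)$ among the six creases at $v$. A flip of $f$ changes exactly two of these six labels. If the two boundary edges of $f$ at $v$ have opposite types, the $(M,V)$-count at $v$ is preserved, so $v$ stays valid and retains its designation. If both edges are mountain creases and $v$ is a mountain vertex, the count shifts from $(4,2)$ to $(2,4)$, so $v$ becomes a valid valley vertex; symmetrically, two valleys at a valley vertex convert $v$ to a valid mountain vertex. The only remaining configurations are two mountains at a valley vertex and two valleys at a mountain vertex, which shift the count to $(0,6)$ and $(6,0)$ respectively; both violate Maekawa, so $v$ becomes invalid in those (and only those) cases.

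Because the three vertices of $f$ are affected independently by the flip---the only shared edges among them are the three boundary edges of $f$, and these are handled symmetrically in the per-vertex analysis above---$f$ is flippable if and only if none of its three vertices is in one of the two obstructing configurations, which is exactly the statement of the claim. I do not anticipate a real obstacle here: the claim is a clean summary of a finite local case split, and the only subtlety worth flagging is the independence of the per-vertex analyses, which follows directly from the fact that flipping a single face affects each incident vertex's label multiset by exactly two swaps and nothing else.
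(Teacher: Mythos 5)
Your proof is correct and follows essentially the same route as the paper, which establishes the claim by the case analysis in the paragraph immediately preceding it (opposite creases preserve the count; two like creases at a like vertex toggle the designation; the two remaining configurations force $(6,0)$ or $(0,6)$ and violate Maekawa). Your version merely makes the Maekawa counting and the per-vertex independence explicit, which is a faithful formalization rather than a different argument.
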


In particular, note that a face with a mountain and a valley crease has only one vertex that can potentially cause it to be not flippable.

\begin{figure}[htb]
\centering\includegraphics[width=0.7\textwidth]{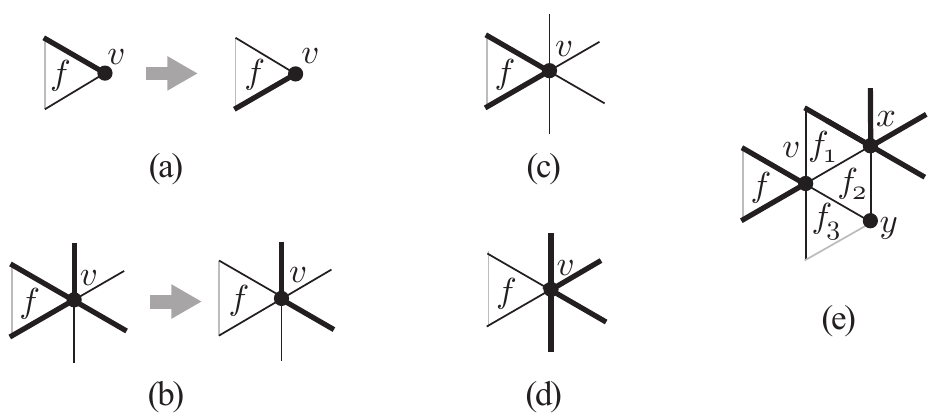}
\caption{(a--d) Some cases for flipping a triangle $f$ incident to vertex $v$: (a) one mountain crease (in bold) and one valley crease; (b) two mountain creases at a mountain vertex;  (c) valley vertex $v$ causes face $f$ to be not flippable; (d) mountain vertex $v$ causes face $f$ to be not flippable. (e) Illustration for Lemma~\ref{lemma:local-face-flip}.}
\label{fig:triangles}
\end{figure}

\begin{lemma} Suppose vertex $v$ causes face $f$ to be not flippable.  Let $f_1, f_2, f_3$ be the 3 faces incident to $v$ but not adjacent to $f$, where $f_2$ is the \emph{middle} face---the one opposite $f$.
Then at least one of $f_1, f_2, f_3$ is flippable.  Furthermore, if $f$ has both a mountain and a valley edge then after flipping one of $f_1, f_2, f_3$, face $f$ becomes flippable. 
\label{lemma:local-face-flip}
\end{lemma}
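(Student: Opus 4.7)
The plan is to work out the local picture around $v$ explicitly, reduce both parts of the lemma to a small set of edge and vertex conditions, and then derive a contradiction by a Maekawa count at the neighbors of $v$. Without loss of generality assume $v$ is a valley vertex and the two edges of $f$ at $v$ are mountain (the other case is symmetric under swapping $M\leftrightarrow V$). Label the six edges at $v$ in cyclic order $e_0,\dots,e_5$ so that $e_0,e_1$ bound $f$; then by Maekawa $e_0,e_1$ are mountain and $e_2,e_3,e_4,e_5$ are valley. Let $u_i$ denote the far endpoint of $e_i$ and write $e_{ij}$ for the edge $u_iu_j$. Then $f_1=vu_2u_3$, $f_2=vu_3u_4$, $f_3=vu_4u_5$, with $f_2$ opposite $f$, and the ``third edges'' (the sides of these faces not incident to $v$) are $e_{23},e_{34},e_{45}$.

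Next I would characterize when each $f_i$ fails to be flippable. At $v$, each $f_i$ has two valley edges meeting a valley vertex, which is not the forbidden configuration of Claim~\ref{lemma:bad-face}, so any obstruction must come from a non-$v$ vertex of $f_i$. Since the edge of $f_i$ coming from the ``$v$-fan'' ($e_2,\dots,e_5$) is always valley, a short case analysis using Claim~\ref{lemma:bad-face} shows that $f_i$ is unflippable if and only if its third edge is a valley \emph{and} at least one of its two non-$v$ vertices is a mountain vertex.

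The heart of the argument, which I expect to be the main obstacle, is ruling out the simultaneous failure of all three faces. Suppose for contradiction that $f_1,f_2,f_3$ are all unflippable. Then $e_{23},e_{34},e_{45}$ are all valleys, and vertex $u_3$ has at least three valley edges incident to it ($e_3,e_{23},e_{34}$). Since Maekawa's theorem requires $u_3$ to have either $4$ or $2$ valley edges, it must have $4$; that is, $u_3$ is a valley vertex. The same reasoning applied to $e_4,e_{34},e_{45}$ at $u_4$ shows $u_4$ is also a valley vertex. But then the characterization above forces $f_2$ to be flippable, since neither of its non-$v$ vertices is a mountain vertex, contradicting our assumption. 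Hence at least one of $f_1,f_2,f_3$ is flippable.

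For the second statement, assume $f$ has both a mountain and a valley edge; since its two edges at $v$ are mountain, its third edge $e_{01}$ must be a valley. Flipping any one of $f_1,f_2,f_3$ flips two valley edges at $v$ into mountains, so $v$'s mountain count rises from $2$ to $4$ and $v$ turns into a mountain vertex, while the edges of $f$ itself are untouched. After this flip: at $v$, the two edges of $f$ are mountain and $v$ is a mountain vertex, so $v$ is no longer bad for $f$; at $u_0$ (resp.\ $u_1$) the two edges of $f$ are $e_0,e_{01}$ (resp.\ $e_1,e_{01}$), which are one mountain and one valley, so $f$ cannot be bad there either. Picking a flippable $f_i$ (which exists by the first part) and flipping it therefore makes $f$ flippable, completing the proof.
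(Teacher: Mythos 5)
Your proof is correct, and it reaches the key existence claim by a slightly different route than the paper. The paper argues directly: assuming $f_2$ is unflippable, the obstructing vertex $x$ must be a mountain vertex whose only two valley edges are $xv$ and the third edge of $f_2$, so \emph{all} its other incident edges are mountains; in particular the third edge of the adjacent $f_1$ (or $f_3$) is a mountain, which immediately makes that face flippable. You instead assume all three of $f_1,f_2,f_3$ are unflippable, conclude that all three ``third edges'' are valleys, and then use a Maekawa count at $u_3$ and $u_4$ (three incident valleys forces a valley vertex) to contradict the unflippability of $f_2$. Both arguments rest on the same local setup and the same characterization of when a face with two valley edges at $v$ can fail to flip, so the difference is one of logical packaging rather than substance; the paper's version has the minor advantage of exhibiting \emph{which} face is flippable (useful when tracking which edges get disturbed in Theorem~\ref{thm:triang-diameter}), while yours is more symmetric and avoids the relabeling of $f_1$ and $f_3$. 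One genuine point in your favor: you explicitly prove the ``furthermore'' clause---that $v$ becomes a mountain vertex after the flip and that the other two vertices of $f$ each see one mountain and one valley of $f$ and so cannot obstruct it---whereas the paper's proof stops after establishing the existence claim and leaves that verification implicit.
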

\begin{proof}  Suppose without loss of generality that $v$ is a valley vertex and the two edges of $f$ incident to $v$ are mountain creases.  
See Figure~\ref{fig:triangles}(e).
Note that $v$ cannot cause any of $f_1, f_2, f_3$ to be not flippable.   Suppose $f_2 = vxy$ is not flippable.  Then this is caused by one of its other vertices, say vertex $x$, and suppose that $x$ is incident to $f_1$ (otherwise relabel $f_1$ and $f_3$).  Since $xv$ is a valley, $xy$ must also be a valley, and $x$ must be a mountain vertex, so its other incident edges must be mountains.  Then  face $f_1$ has a mountain edge opposite $v$ and is flippable.  
\end{proof}

\subsection{Reconfiguring a triangle lattice}
\label{sec:triangle-reconfig}

To show that any locally-valid MV assignment of the triangle lattice can be reconfigured to any other using face flips, we use the standard technique of reconfiguring any locally-valid MV assignment to a ``canonical'' MV assignment.  
To reconfigure $A$ to $B$  via the canonical configuration $C$, we reconfigure $A$ to $C$, and then perform the reverse of the reconfiguration sequence that takes $B$ to $C$.  
Our canonical configuration $C$ has mountain folds on the $-30^\circ$ lines of the lattice, and valley folds on the other lines---see Figure~\ref{fig:triangle-reconfig}(a).

\begin{figure}[htb]
\centering\includegraphics[width=0.8\textwidth]{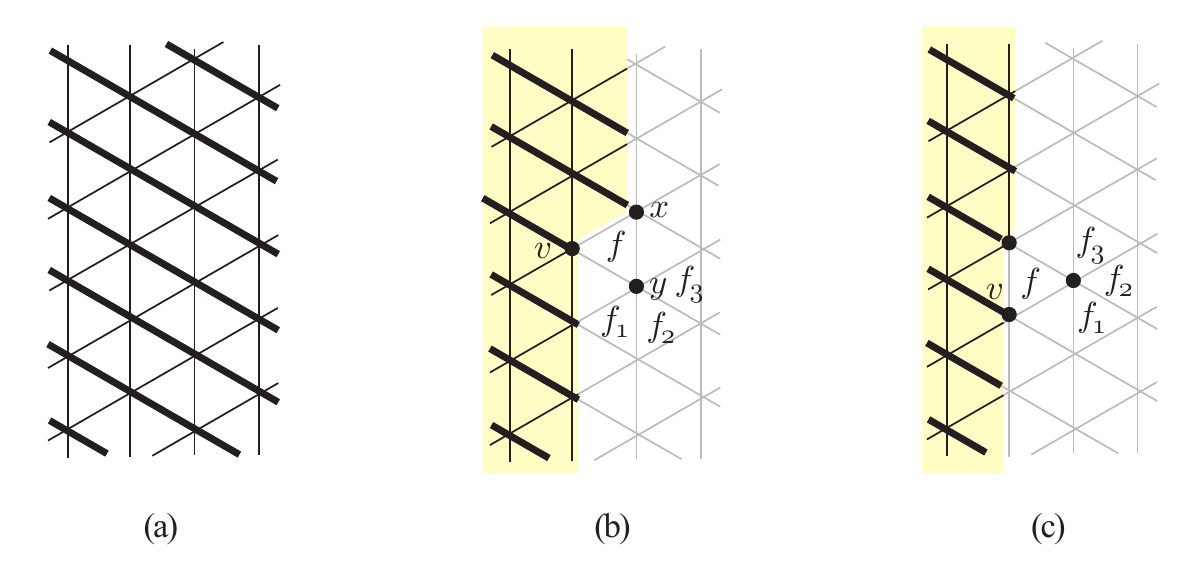}
\caption{(a) The canonical configuration $C$.  Mountain folds are in bold. (b) Fixing the diagonal edges down a column.  The completed region is coloured yellow. (c) Fixing the vertical edges down a column.}
\label{fig:triangle-reconfig}
\end{figure}

\begin{theorem}   
	\label{thm:triang-diameter}
	Any locally-valid MV assignment of the triangle lattice can be reconfigured to the canonical configuration $C$ using $2n$ flips, where $n$ is the number of faces.
\end{theorem}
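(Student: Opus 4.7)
The plan is to reconfigure any locally-valid MV assignment $\mu$ to $C$ by a column sweep of the triangle lattice from left to right, making at most two flips per face so that the total is at most $2n$. Enumerate the columns $1, 2, \ldots$ and maintain the invariant that, after processing column $j$, every lattice edge whose endpoints both lie in columns $1, \ldots, j$ agrees with $C$, while the MV assignment restricted to the remaining columns is still locally valid. The base case is trivial, and after the final column the invariant says $\mu$ has been reconfigured to $C$.

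For the inductive step at column $j+1$, I would sweep the faces of the column from top to bottom in two passes, as suggested by Figure~\ref{fig:triangle-reconfig}(b)(c). The first pass fixes the diagonal edges shared with column $j$ (the $-30^\circ$ edges): for each face in column $j+1$ whose target diagonal edge disagrees with $C$, flip that face. The second pass fixes the vertical (or remaining) edges in the same top-to-bottom order. Because each face in the column is touched at most once per pass, this pass-budget is at most $2$ flips per face, and summing over all columns gives at most $2n$ flips in total.

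The main obstacle is guaranteeing that each intended flip can actually be performed. By \autoref{lemma:bad-face}, a face $f$ that we want to flip may be blocked by one of its vertices $v$. Here \autoref{lemma:local-face-flip} rescues us: among the three faces opposite $f$ at $v$ at least one is flippable, and after flipping it $f$ itself becomes flippable, at the cost of one auxiliary flip. The key geometric observation is that because the sweep is left-to-right and top-to-bottom and the invariant places all already-matched edges into the processed region, any blocking vertex $v$ lies on the frontier between the processed and unprocessed regions; hence the enabling flip lands on a face of the unprocessed region and cannot disturb any edge already matched to $C$. The remaining bookkeeping that I would carry out is a small case analysis, based on the orientation of $f$ and the location of $v$, showing that each enabling flip can be charged to the $2$-flip-per-face budget of a face of the next unprocessed column (or row), so that in aggregate no face is flipped more than twice.
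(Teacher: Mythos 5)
Your overall strategy coincides with the paper's: a left-to-right column sweep that first fixes the diagonal edges and then the vertical edges of each column, using Claim~\ref{lemma:bad-face} to detect blocked faces and Lemma~\ref{lemma:local-face-flip} to unblock them with one auxiliary flip, for a budget of two flips per face. However, the step you call ``the key geometric observation''---that any blocking vertex is positioned so that the enabling flip lands entirely in the unprocessed region---is precisely the nontrivial content of the proof, and the reason you offer (``any blocking vertex lies on the frontier'') does not establish it and in fact points the wrong way. Concretely, take the diagonal-edge case with $f=vxy$, $v$ on the already-completed side. If the blocking vertex were $x$, the vertex adjacent to the already-fixed diagonal edges, then one of the three faces $f_1,f_2,f_3$ supplied by Lemma~\ref{lemma:local-face-flip} is the triangle just above $f$ in the same column, two of whose edges are diagonals incident to the vertex above $v$ and hence already set to their canonical values; flipping it would destroy your invariant. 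What you actually need is the opposite of your stated observation: the blocker can only be the vertex of $f$ \emph{farthest} from the completed region ($y$ here, $w$ in the vertical case), all of whose relevant neighbouring faces are unprocessed.

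Establishing that is where the structure of the canonical configuration and local validity must be used, and it is missing from your sketch. The paper's argument is: the completed region already forces crease types at the frontier vertices---$v$ has three incident valleys, so it is a valley vertex and cannot block $f$; and $x$ already has one incident mountain from the completed region, so if both edges of $f$ at $x$ were mountains then $x$ would have at least three incident mountains, hence be a mountain vertex, hence unable to block $f$ by Claim~\ref{lemma:bad-face}. A parallel argument eliminates $u$ and $v$ as blockers in the vertical-edge case. Without this case analysis, an enabling flip forced by a frontier blocker could undo previously fixed edges and the induction fails; with it, your two-flips-per-face accounting and the $2n$ bound go through exactly as in the paper.
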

\begin{proof}
We use an iterative process, proceeding from left to right, first fixing the diagonal edges down a column (Figure~\ref{fig:triangle-reconfig}(b)), and then fixing the vertical edges down a line (Figure~\ref{fig:triangle-reconfig}(c)), while always preserving the MV assignment in  the ``completed'' region (shown in yellow in the figures).  We address these two cases separately. 

\paragraph{Case 1.  Diagonal edges.} Suppose all the diagonal edges down a column match the canonical configuration up until the two edges incident with vertex $v$ on the left-hand side of the  column.  Let $v$'s neighbours across the column be $x$ and $y$, with $x$ above $y$.  Because vertex $v$ already has 3 incident valley creases from the completed region, $v$ must be a valley vertex and one of the edges $vx$, $vy$ must be a mountain and the other a valley.

If edge $vx$ is a valley then $vy$ is a mountain, and the two edges incident to $v$ are correct for the canonical configuration.
Thus, we may suppose that $vx$ is a mountain and $vy$ is a valley.  
Let $f$ be the face $vxy$.  Flipping $f$ corrects the two edges incident to $v$, so if $f$ is flippable, we are done.  
Thus, we may suppose that $f$ is not flippable.  By Lemma~\ref{lemma:bad-face}, $f$ must be unflippable because of vertex $x$ or $y$.  

We first show that $x$ cannot cause $f$ to be unflippable. Suppose it did.  
Then both edges of $f$ incident to $x$ must be the same; furthermore, they must be mountains because $vx$ is a mountain.  Vertex $x$ already had one incident mountain crease from the completed region.  But then $x$ has at least 3 incident mountain edges so it must be a mountain vertex, and it cannot prevent $f$ from flipping. 

Next, suppose that $y$ causes $f$ to be unflippable.
Let $f_1, f_2, f_3$ be the three faces incident to $y$ and not adjacent to $f$.
By Lemma~\ref{lemma:local-face-flip} at least one of $f_1, f_2, f_3$ is flippable.  Note that none of the edges of these faces are in the completed region.  (See  Figure~\ref{fig:triangle-reconfig}(b).) 
Thus, we can flip one of  $f_1, f_2, f_3$ without disturbing the completed region. 
Furthermore, $f$ will then be flippable by Lemma~\ref{lemma:local-face-flip}, since 
$f$ has a mountain and a valley crease.

\paragraph{Case 2.  Vertical edges.}
Suppose that all the vertical edges down a column match the canonical configuration up until the edge $uv$ with $u$ above $v$.  Let the triangle to the right of $uv$ be $f$, and suppose the $f$'s third vertex is $w$. 
If $uv$ is a valley, then it matches the canonical configuration.  So, suppose that $uv$ is a mountain.  If face $f$ is flippable, that would fix $uv$, so we may suppose that $f$ is not flippable.  We claim that neither $u$ nor $v$ can be the cause. Observe that $u$ and $v$ each have two incident mountain edges.  In order for $u$, say, to cause $f$ to be non-flippable, edge $uw$ must be a mountain like $uv$.  But then $u$ is a mountain vertex so it cannot prevent $f$ from flipping.  The same argument applies to $v$.

Thus, $f$ must be non-flippable because of vertex $w$.  
Let $f_1, f_2, f_3$ be the three faces incident to $w$ and not adjacent to $f$.
By Lemma~\ref{lemma:local-face-flip} at least one of $f_1, f_2, f_3$ is flippable.  Note that none of the edges of these faces are in the completed region.  (See  Figure~\ref{fig:triangle-reconfig}(c).) 
Thus, we can flip one of  $f_1, f_2, f_3$ without disturbing the completed region.
Furthermore, $f$ will then be flippable by Lemma~\ref{lemma:local-face-flip},  since $f$ has a mountain and a valley crease.
\end{proof}

\subsection{Finding the minimum number of face flips is NP-hard}
\label{sec:triangle-NP-hard}

We show that the problem of finding the sequence of face flips of minimum length between two given crease patterns is NP-hard. 
For hardness, we reduce from \textsc{$k$-Vertex-Cover} in max-degree-3 graphs which is NP-complete~\cite{lichtenstein1982planar}.
Given graph $G$ the problem asks for the a subset $S$ of $V(G)$ so that every edge in $E(G)$ is incident to at least one vertex in $S$, and $|S|\le k$.
We assume that our input is a max-degree-3 graph $G$ embedded in the hexagonal grid.
Notice that this does not mean that $G$ is a hexagonal grid graph since the grid is bipartite and admits a polynomial solution for the \textsc{$k$-Vertex-Cover} problem.
Rather, edges in $G$ can be drawn as a path in the grid admitting bends.
Such embeddings of polynomial size can be computed in polynomial time~\cite{hex}.

\begin{figure}[h]
	\centering
	\includegraphics[width=\linewidth]{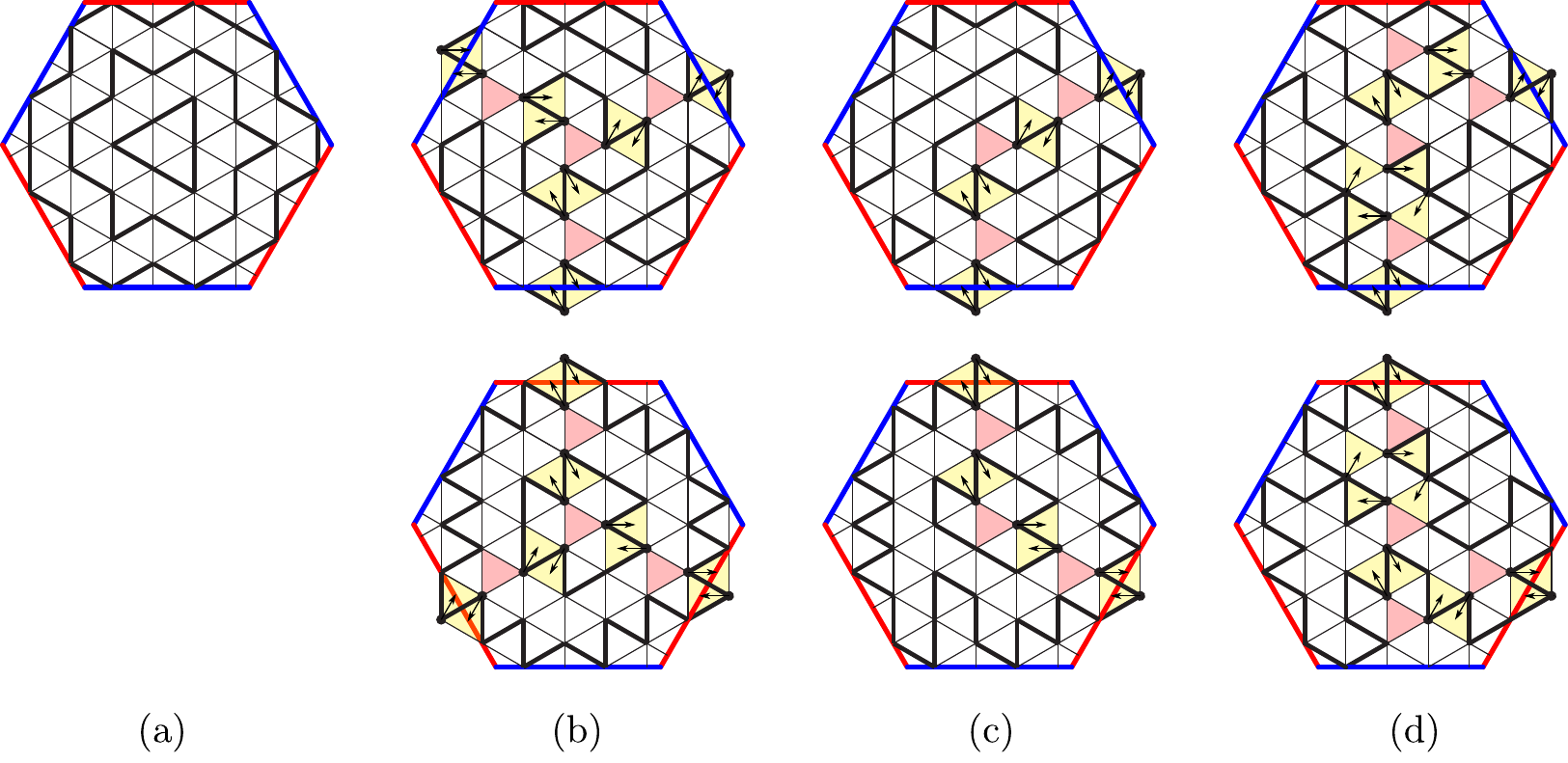}
	\caption{Reduction from \textsc{$k$-Vertex-Cover}. (a) Filler, (b) deg-3, (c) deg-2, and (d) bend gadgets. All yellow faces are unflippable and the arrows indicate the vertex that makes it unflippable.}
	\label{fig:gadgets}
\end{figure}

\begin{theorem}
	Given two MV assignments of the triangle lattice $A$ and $B$, it is NP-complete to decide whether A can be reconfigured into B using at most $m$ face flips.
\end{theorem}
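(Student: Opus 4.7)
Membership in NP is straightforward: a proposed sequence of at most $m$ face flips can be applied one at a time and checked to leave each intermediate assignment locally valid and to end at $B$. The bulk of the work is in the hardness reduction, which I would organize around the gadgets of Figure~\ref{fig:gadgets}. Given an instance $(G,k)$ of \textsc{$k$-Vertex-Cover} on a max-degree-$3$ graph embedded in the hexagonal grid, the plan is to scale the embedding so that each vertex of $G$ sits in a disk containing either the deg-$2$ or the deg-$3$ gadget, each bend along an edge-path carries a copy of the bend gadget, and every remaining region is tiled by filler gadgets. The MV assignment $A$ places every gadget in its default state; the assignment $B$ agrees with $A$ on every filler and vertex gadget, but differs from $A$ at a designated ``witness'' spot inside each edge-path. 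The unflippable yellow faces, whose positions are fixed by the arrows in Figure~\ref{fig:gadgets}, must be arranged so that the only way to toggle the witness of an edge is for a sequence of flips to propagate inward from one of its two endpoint vertex gadgets.

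The gadgets should be designed to support an additive accounting: reconciling a single edge-path requires some fixed number $c_e$ of flips (the propagation cost through its bend and filler gadgets), and ``activating'' a vertex gadget costs a fixed $c_v$, after which all (up to three) incident edge-paths can be served by that activation at no extra cost. Setting
\[ m \;=\; c_v\cdot k \;+\; \sum_{e\in E(G)} c_e, \]
the forward direction is easy: starting from a vertex cover $S$ of size at most $k$, first perform the activation flips in each vertex gadget of $S$, then carry out the propagation flips along each edge-path from one of its endpoints in $S$. Local validity at every intermediate step is ensured by the gadget design together with Claim~\ref{lemma:bad-face}, and the total flip count is at most $m$.

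The converse direction is the main obstacle and is where the reduction lives or dies. The plan is to charge each flip of an arbitrary optimal sequence to the gadget in which it occurs and to establish two invariants: (i) every edge-path absorbs at least $c_e$ flips in any sequence converting $A$ to $B$; and (ii) if any propagation flip occurs inside an edge-path, then the chain of unflippable yellow faces forces at least $c_v$ flips to have happened inside at least one of its two endpoint vertex gadgets. Invariant (ii) is where Claim~\ref{lemma:bad-face} and Lemma~\ref{lemma:local-face-flip} must be deployed: the yellow walls should be placed so that no ``alternative route'' for toggling an edge witness exists from the filler side or from an unactivated vertex gadget. Summing these charges, the set of vertex gadgets that absorb at least $c_v$ flips forms a vertex cover of $G$, and a total of $m$ flips forces that cover to have size at most $k$.

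The technical subtlety to watch for in this last step is that face flips do not compose linearly: one must rule out cancellations, where the same face is flipped twice, and shortcuts, where a long propagation is replaced by a handful of clever flips elsewhere. To handle this cleanly I would use a potential function—for example, the number of edges whose witness still disagrees with $B$, weighted by a ``distance to an activated vertex gadget'' term coming from the directed structure of the unflippable walls—and argue that every flip decreases the potential by at most one while reconciliation requires decreasing it to zero. Getting the gadgets tight enough that this potential argument yields exactly the $c_v k + \sum c_e$ bound, with no slack that could allow uncovered edges, is the delicate part of the construction.
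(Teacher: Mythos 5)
Your overall architecture is the same as the paper's: a reduction from \textsc{$k$-Vertex-Cover} on max-degree-$3$ graphs embedded in the hexagonal grid, with vertex, bend, and filler gadgets, ``walls'' of unflippable faces forcing propagation from an activated endpoint, and a budget of the form $m=c_v k+\sum_e c_e$ (the paper uses $m=2k+8|E(G)|+14b$). However, as written the proposal has genuine gaps. First, the entire load-bearing part of the reduction --- the actual gadgets, the verification that $A$ and $B$ are locally valid, and the exact constants $c_v,c_e$ --- is deferred; you explicitly flag the tightness of the accounting as ``the delicate part of the construction,'' but that is precisely where the proof lives, so nothing is yet established. Second, your instinct in the converse direction to ``rule out cancellations, where the same face is flipped twice'' points in the wrong direction for this construction: in the paper's argument the enabling (pink) faces \emph{must} be flipped exactly twice, because $B$ agrees with $A$ on them yet they have to be toggled once to make the adjacent yellow (initially unflippable, by Claim~\ref{lemma:bad-face}) faces flippable and then toggled back. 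These forced double flips are what make $c_v=2$ per cover vertex and contribute the $2+4b_e$ term inside $c_e=8+14b_e$; a lower-bound argument that tries to exclude repeated flips cannot reach the stated value of $m$. The paper's converse is a direct parity-and-counting argument (yellow faces are flipped an odd number of times, non-yellow faces an even number, then a charging of the doubled flips to yellow components), not a potential function; your proposed potential would still need to encode exactly this parity structure to work.

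A smaller but real gap is NP membership. Since $m$ is part of the input and may be encoded in binary, ``apply the $m$ flips and check'' is not by itself a polynomial-size certificate. The paper closes this by invoking Theorem~\ref{thm:triang-diameter}: any two locally-valid assignments are connected by at most $2n$ flips through the canonical configuration, so the answer is trivially yes when $m\ge 4n$ and otherwise a certificate of length $m<4n$ suffices. You should make this dependence on the diameter bound explicit.
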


\begin{proof}
	The membership in NP is a consequence of Theorem~\ref{thm:triang-diameter}.
	We proceed with the reduction from \textsc{$k$-Vertex-Cover}.
	Given a drawing of a graph $G$ in the hexagonal grid, we construct a MV assignment $A$ as follows.
	We will use the gadgets shown in Figure~\ref{fig:gadgets}.
	We first obtain a triangular grid by inserting one \emph{auxiliary} vertex in each hexagon and connecting it to each vertex of the hexagon.
	We use the triangle grid to tile the plane using hexagons so that each hexagon corresponds to either a vertex of the original hexagonal grid or an auxiliary vertex.
	If a tile corresponds to a vertex that is not used in the embedding (a degree-3 vertex), we replace it with a \emph{filler} gadget shown in Figure~\ref{fig:gadgets}~(a) (\emph{deg-3} gadget shown in Figure~\ref{fig:gadgets}~(b)).
	If the tile corresponds to a degree-2 vertex (bend), we replace it with the \emph{deg-2} gadget shown in Figure~\ref{fig:gadgets}~(c) (\emph{bend} gadget shown in Figure~\ref{fig:gadgets}~(b)) or a $120^\circ$ rotation.
	This defines the MV assignment $A$.
	$B$ is obtained from $A$ by flipping the assignment of the perimeter of each yellow diamond or triangle as shown in Figure~\ref{fig:hardness-change}.
	All vertices in the interior of gadgets are locally valid.
	By construction a blue edge of a gadget will be matched with a red edge. 
	Every vertex on the boundary of a gadget has either zero or an even number of incident mountains and at least one incident valley. 
	A vertex on a red edge is always incident to a mountain crease.
	Then, the MV assignment $A$ and $B$ are locally valid.
	We now determine $m$.
	Let $b$ be the number of bends in the drawing of $G$.
	We set $m=2 k+8 |E(G)|+14 b$.
	
	\begin{figure}[h]
		\centering
		\includegraphics[width=0.2\linewidth]{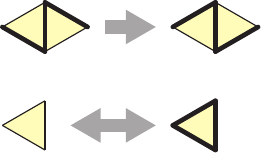}
		\caption{Changes in the MV assignment.}
		\label{fig:hardness-change}
	\end{figure}
	
	($\Rightarrow$) Assume that the \textsc{$k$-Vertex-Cover} instance admits a solution $S$.
	We will construct a sequence of at most $m$ flips that brings $A$ into $B$.
	Note that all pink triangles are flippable and, because no two are adjacent, flipping any subset of them will not render an unflipped pink traingle not flippable.
	We call the pink central triangle of the deg-3 and deg-2 gadgets \emph{vertex triangles}.
	If a vertex is in $S$, flip the vertex triangle of its corresponding gadget.
	Every edge corresponds to a chain of an even number of pink triangles between the vertex triangles of its corresponding endpoints.
	Since every edge $e$ has at least an endpoint in $S$, we can flip alternating pink triangles ($1+2b_e$ of them where $b_e$ is the number of bends of $e$) so that every yellow diamond along $e$ has an adjacent flipped triangle.
	Let $P$ be the set of faces flipped so far.
	Note that every face colored yellow is initially unflippable, but at the current state at least one face in each diamond or one face in each group of 4 yellow triangles in a bend gadget is flippable.
	By flipping such face an adjacent yellow face becomes flippable.
	Proceed by flipping all yellow faces.
	Note that a vertex of a face in $P$ is incident to exactly two yellow faces.
	Then, every face in $P$ is flippable at the current state.
	Now, we can flip all faces in $P$ to obtain $B$.
	The total number of flips is $2 |S|+8 |E(G)|+14 b<m$.
	
	($\Leftarrow$) Assume that there exist a sequence $F$ of flips transforming $A$ into $B$.
	Notice that it would suffice to flip every yellow face, however, they are unflippable in $A$.
	We show that yellow faces are flipped exactly once and we can assume that pink faces are either not flipped or flipped twice.
	Assume that $F$ does not flip a yellow face $f$. 
	Then, by construction of $B$, there are at least two white or pink faces that share an edge with $f$ that must be flipped an odd number of times.
	However, flipping those faces create assignments that do not match $B$, hence requiring further flips.
	By propagating this argument, we conclude that every white and pink face must be flipped an odd number of times while yellow faces are either never flipped or flipped an even number of times.
	The total number of flips is clearly greater then $m$ if $k<|V(G)|$, a contradiction.
	Therefore, yellow faces are flipped an odd number of times.
	Since the yellow faces are not flippable in $A$, at least one face adjacent to each yellow component must be flipped an odd number of times. 
	If there is a white face $f_w$ flipped an odd number of times is adjacent to a pink face $f_p$ that is not flipped, we can change the flipping sequence so that $f_p$ is flipped instead of $f_w$ because the same yellow faces (or a superset) become flippable in both sequences.
	We can assume that no other faces are flipped since they must be flipped an even number of times and they do not affect whether yellow faces become flippable or not.
	Except for vertex triangles, flipping a pink face can make a yellow face in at most two different yellow components become flippable. 
	The number of yellow components between vertex triangles corresponding to an edge $e$ is odd and the total number of yellow faces in such components is $6+10b_e$ where $b_e$ is the number of bends od $e$. 
	Then, excluding flips of vertex triangles, at least $8+14 b_e$ are necessary for each edge $e$.
	If an edge achieves such lower bound, then at least one of its endpoints is a vertex triangle that was flipped an even number of times.
	If more than $8+14 b_e$ flips were performed for a given edge $e$, then there is a yellow component adjacent to two pink faces that are flipped at least twice each.
	We can modify the sequence while not increasing the number of flips so that the only such yellow components are the ones adjacent to vertex triangles.
	In such a solution, at most $k$ vertex faces can be flipped by the definition of $m$.
	Then, we can obtain a solution for the \textsc{$k$-Vertex-Cover} instance by selecting the vertices corresponding to the vertex faces that are flipped.
\end{proof}

\section{Conclusion}

We have seen how face flips in flat-foldable origami tessellations describe different kinds of inherent structure between locally-valid MV assignments.  We employed a variety of techniques, such as modifications of the dual graph, height functions, and reconfiguration schemes to prove connectivity of the face flip configuration space (the origami flip graph) and to find minimal sets of faces to flip from one locally-valid MV assignment to another.  The latter was shown to be NP-hard in the case of triangle lattice crease patterns.  It is interesting to see how different origami tessellations require such  different techniques to analyze their origami flip graph structure.
 This indicates that face flip configuration space graphs are rich in structure among all flat-foldable crease patterns. 

 These results relate to studies in materials science and mechanical engineering on applied origami (such as \cite{Assis,Evans,Silverberg}). Face flips provide a way of studying the likelihood of a material either being manipulated from one MV assignment to another, or folding to a state that is ``close" to the target MV assignment state, where ``close" could be interpreted as two vertices close to each other in the origami flip graph.  Further, if the origami flip graph is disconnected, then its different components could identify folded states that have very little chance of being achieved by an actual folded material with a given target MV assignment.

Many questions can be posed for further work.  For example: Is the NP-hardness of finding a minimal path in the origami flip graph in the triangle lattice  due to the fact that the vertices in the crease pattern have degree 6 (aside from the boundary vertices), as opposed to the tessellations in Sections~\ref{sec2} and \ref{sec3} whose vertices have degree 4?  Are other origami tessellations in the engineering literature, such as \cite{Evans}, amenable to the face flip techniques presented here?  

\section*{Acknowledgments} This work was conducted at the 2018 Bellairs Workshop on Computational Geometry, co-organized by Erik Demaine and Godfried Toussaint.  We thank the other participants of the workshop for helpful discussions.  We also thank Sarah Nash and Natasha Ter-Saakov for helpful comments on an earlier draft of this work.
H. A. A. was partially supported by NSF grants CCF-1422311 and CCF-1423615.
D. E. was partially supported by NSF grants  CCF-1618301 and CCF-1616248.  T. C. H. was supported by NSF grant DMS-1906202.

\bibliographystyle{plainurl}
\raggedright
\bibliography{FaceFlips}

\begin{thebibliography}{10}

\bibitem{Assis}
Michael Assis.
\newblock {Exactly solvable flat-foldable quadrilateral origami tilings}.
\newblock {\em Phys. Rev. E}, 98(3):032112, September 2018.
\newblock \href {http://dx.doi.org/10.1103/PhysRevE.98.032112}
  {\path{doi:10.1103/PhysRevE.98.032112}}.

\bibitem{hex}
Shabnam Aziza and Therese Biedl.
\newblock Hexagonal grid drawings: Algorithms and lower bounds.
\newblock In J{\'a}nos Pach, editor, {\em Graph Drawing: 12th International
  Symposium, GD 2004, New York, NY, USA, September 29--October 2, 2004, Revised
  Selected Papers}, pages 18--24, Berlin, Heidelberg, 2005. Springer.
\newblock \href {http://dx.doi.org/10.1007/978-3-540-31843-9_3}
  {\path{doi:10.1007/978-3-540-31843-9_3}}.

\bibitem{Ball}
Brad Ballinger, Mirela Damian, David Eppstein, Robin~Y. Flatland, Jessica
  Ginepro, and Thomas Hull.
\newblock {Minimum forcing sets for Miura folding patterns}.
\newblock In {\em Proc. 26th ACM-SIAM Symp. Discrete Algorithms, San Diego,
  California, 2015}, pages 136{--}147, 2015.
\newblock \href {http://arxiv.org/abs/1410.2231} {\path{arXiv:1410.2231}},
  \href {http://dx.doi.org/10.1137/1.9781611973730.11}
  {\path{doi:10.1137/1.9781611973730.11}}.

\bibitem{DiF}
P.~Di~Francesco.
\newblock {Folding and coloring problems in mathematics and physics}.
\newblock {\em Bulletin of the American Mathematical Society (N.S.)},
  37(3):251{--}307, 2000.
\newblock \href {http://dx.doi.org/10.1090/S0273-0979-00-00870-3}
  {\path{doi:10.1090/S0273-0979-00-00870-3}}.

\bibitem{Evans}
Thomas~A. Evans, Robert~J. Lang, Spencer~P. Magleby, and Larry~L. Howell.
\newblock {Rigidly foldable origami gadgets and tessellations}.
\newblock {\em Royal Society Open Science}, 2(9):150067, 2015.
\newblock \href {http://dx.doi.org/10.1098/rsos.150067}
  {\path{doi:10.1098/rsos.150067}}.

\bibitem{Evans2}
Thomas~A. Evans, Robert~J. Lang, Spencer~P. Magleby, and Larry~L. Howell.
\newblock {Rigidly foldable origami twists}.
\newblock In Koryo Miura et~al., editors, {\em Origami$^6$: Proceedings of the
  Sixth International Meeting on Origami Science, Mathematics, and Education},
  pages 119{--}130, Providence, RI, 2015. American Mathematical Society.

\bibitem{GHull}
Jessica Ginepro and Thomas~C. Hull.
\newblock {Counting Miura-ori foldings}.
\newblock {\em Journal of Integer Sequences}, 17(10):Article 14.10.8, 2014.
\newblock URL: \url{https://cs.uwaterloo.ca/journals/JIS/VOL17/Hull/hull.html}.

\bibitem{Goldberg}
Leslie~Ann Goldberg, Russell Martin, and Mike Paterson.
\newblock {Random sampling of 3-colorings in $\mathbb{Z}^2$}.
\newblock {\em Random Structures {\&} Algorithms}, 24(3):279{--}302, 2004.
\newblock \href {http://dx.doi.org/10.1002/rsa.20002}
  {\path{doi:10.1002/rsa.20002}}.

\bibitem{Huff}
David~A. Huffman.
\newblock {Curvature and creases: A primer on paper}.
\newblock {\em IEEE Trans. Comput.}, C-25(10):1010{--}1019, 1976.
\newblock \href {http://dx.doi.org/10.1109/TC.1976.1674542}
  {\path{doi:10.1109/TC.1976.1674542}}.

\bibitem{Hull2}
Thomas~C. Hull.
\newblock {The combinatorics of flat folds: a survey}.
\newblock In Thomas~C. Hull, editor, {\em Origami$^3$ (Asilomar, CA, 2001)},
  pages 29{--}38. A K Peters, Natick, MA, 2002.
\newblock \href {http://arxiv.org/abs/1307.1065} {\path{arXiv:1307.1065}}.

\bibitem{TessLang}
Robert~J. Lang.
\newblock {Tessellatica 11.1}.
\newblock Software, 2018.
\newblock URL: \url{https://langorigami.com/article/tessellatica/}.

\bibitem{lichtenstein1982planar}
David Lichtenstein.
\newblock Planar formulae and their uses.
\newblock {\em SIAM Journal on Computing}, 11(2):329--343, 1982.
\newblock \href {http://dx.doi.org/10.1137/0211025}
  {\path{doi:10.1137/0211025}}.

\bibitem{Luby}
Michael Luby, Dana Randall, and Alistair Sinclair.
\newblock {Markov chain algorithms for planar lattice structures}.
\newblock {\em SIAM Journal on Computing}, 31(1):167{--}192, 2001.
\newblock \href {http://dx.doi.org/10.1137/S0097539799360355}
  {\path{doi:10.1137/S0097539799360355}}.

\bibitem{Silverberg}
Jesse~L. Silverberg, Jun-Hee Na, Arthur~A. Evans, Bin Liu, Thomas~C. Hull,
  Christian~D. Santangelo, Robert~J. Lang, Ryan~C. Hayward, and Itai Cohen.
\newblock {Origami structures with a critical transition to bistability arising
  from hidden degrees of freedom}.
\newblock {\em Nature Materials}, 14(4):389{--}393, 2015.
\newblock \href {http://dx.doi.org/10.1038/nmat4232}
  {\path{doi:10.1038/nmat4232}}.

\bibitem{Van14}
K.~VanderWerf.
\newblock {The Physics, Geometry, and Combinatorics of the Miura-ori}.
\newblock Honors thesis, University of Massachusetts, Amherst, 2014.

\end{thebibliography}
\end{document}